
\documentclass[10pt,reqno]{article}

\usepackage{amsmath}
\usepackage{amssymb}
\usepackage{mathtools}
\usepackage{amsthm}
\usepackage{mathrsfs}
\usepackage{amsfonts}
\usepackage{amscd}

\usepackage{graphicx}
\usepackage[export]{adjustbox}
\usepackage{xcolor}
\usepackage[dvipsnames]{xcolor}
\usepackage{colortbl}

\usepackage[margin=1in]{geometry}
\usepackage{setspace}
\usepackage{float}
\usepackage{authblk}
\usepackage{enumitem}
\usepackage{subcaption}

\usepackage{hyperref}
\usepackage[nocompress, space]{cite}

\usepackage{esint}
\usepackage{lineno}
\usepackage{comment}


\theoremstyle{plain}
\newtheorem{thm}{Theorem}[section]
\newtheorem*{thm*}{Theorem}

\newtheorem*{cor*}{Corollary}
\newtheorem{lemma}[thm]{Lemma}
\newtheorem*{lemma*}{Lemma}
\newtheorem{prop}[thm]{Proposition}
\newtheorem*{prop*}{Proposition}

\theoremstyle{definition}
\newtheorem{egsample}[thm]{Example}
\newtheorem*{egsample*}{Example}
\newtheorem{definition} [thm]{Definition}

\newtheorem{problem}{Problem}
\counterwithout{problem}{section}

\newtheorem{remark}[thm]{Remark}
\newtheorem*{remark*}{Remark}

\newenvironment{eg*}
  {\begin{egsample*}}
  {\leavevmode\unskip\penalty9999 \hbox{}\nobreak\hfill\quad\hbox{\(\blacktriangle\)}\end{egsample*}}

\theoremstyle{plain}
\newtheorem{theorem}[thm]{Theorem}
\newtheorem{proposition}[thm]{Proposition}
\newtheorem{corollary}[thm]{Corollary}

\numberwithin{equation}{section}

\newtheorem{assumption}{Assumption}
\counterwithout{assumption}{section}

\theoremstyle{plain}

\newenvironment{customAssumA}[1]{
    
    \innercustomAssumA
}{\endinnercustomAssumA}

\DeclareMathOperator*{\Div}{div}

\DeclareMathOperator{\tr}{\mathfrak{tr}}

\newcommand{\R}{\mathbb{R}}

\def \bf{\textbf}

\newcommand{\cU}{\mathcal{U}}
\newcommand{\cX}{\mathcal{X}}
\newcommand{\cY}{\mathcal{Y}}

\def \cI {\mathcal{I}}

\newcommand{\abs}[1]{\left| #1\right|}

\newcommand{\del}{\partial}

\def \ep {\varepsilon}
\def \a {\alpha}
\def \b {\beta}

\newcommand{\widebar}[1]{\mkern1mu\overline{\mkern-1mu#1\mkern-1mu}\mkern1mu}

\def\d{{\,\, \rm d}}
\def\dx{{\,\,\rm d}x}
\def\dy{{\,\,\rm d}y}
\def\dz{{\,\,\rm d}z}

\def\ds{{\,\,\rm d}s}

\hypersetup{
    colorlinks=true,
    linkcolor=blue,
    citecolor=blue,
    urlcolor=blue,
}

\begin{document}

\title{{A First-Order Mean-Field Game on a Bounded Domain with Mixed Boundary Conditions}\thanks{The research reported in this publication was supported by funding from King Abdullah University of Science and Technology (KAUST), including baseline funds and KAUST OSR-CRG2021-4674. A. Alharbi is a Teaching Assistant at the Islamic University of Madinah, currently pursuing his PhD at KAUST.}	}

\author[1,2]{AbdulRahman M. Alharbi}
\author[1]{Yuri Ashrafyan}
\author[1]{Diogo Gomes}

\affil[1]{King Abdullah University of Science and Technology (KAUST)}
\affil[2]{Islamic University of Madinah}

\maketitle

\begin{abstract}
Entry-exit dynamics are crucial in modeling crowd movement. Here,
we present a novel first-order, stationary mean-field game (MFG) model on bounded domains that accurately captures entry-exit dynamics.
In our model, the interior dynamics are governed by a standard first-order stationary MFG system: a first-order Hamilton-Jacobi equation coupled with a transport equation.
The model incorporates mixed boundary conditions that correspond to an entry region $\Gamma_N$
and an exit region $\Gamma_D$. A Neumann condition on \(\Gamma_N\) prescribes the agent inflow via a non-homogeneous flux term, \(j(x)\); a no-entry condition on \(\Gamma_D\) restricts this boundary region to exit only, preventing inward flow; finally, in \(\Gamma_D\), we prescribe an upper bound on the exit cost combined with a complementary contact-set condition.

This contact-set condition identifies boundary points where the value function attains the exit cost (contact points) versus points where the non-penetration condition prevents artificial inflows (non-contact points). However, as our examples show, contact does not necessarily imply that exit occurs. This mixed approach overcomes the limitations of classical Dirichlet conditions, which
can artificially force boundary points to act as both entry and exit sites.

We analyze the system using a variational formulation, applying the direct method of calculus of variations to establish the existence of solutions under minimal regularity assumptions. Furthermore, we prove the uniqueness of the gradient of the value function (particularly in regions with positive agent density) and the uniqueness of the density function.

Several examples, including cases in one and two dimensions, illustrate first-order MFG phenomena such as the formation of empty regions (where agent density vanishes) and the proper assignment of entry and exit roles. These results establish a rigorous mathematical foundation for modeling realistic entry-exit scenarios.
\end{abstract}

\section{Introduction} 

This paper studies first-order stationary Mean-Field Games (MFGs) in bounded domains under nonstandard mixed boundary conditions (BCs).
These boundary conditions induce a novel MFG structure that resolves an inherent issue with standard Dirichlet conditions: a boundary point may inadvertently serve as an entrance instead of an exit.
This MFG models a scenario where agents can enter or exit only through designated boundary portions.
This new MFG system is relevant in various applications, such as crowd dynamics, where individuals may join or leave a venue through different gates, or economic models, where firms may enter or exit a market under certain conditions.
One of the paper's main goals is to show that the mixed BCs lead to a well-defined MFG formulation.

Let \(\mathcal{H}^{d-1}\) denote the \(d-1\)-dimensional Hausdorff measure. 
The precise problem we consider is as follows.

\begin{problem}[\bfseries The Mean-Field Game (MFG)]\label{problem:1}
Let \(\Omega \subset \mathbb{R}^d\) be a bounded, open, and connected \(C^1\)--domain whose boundary \(\Gamma = \partial \Omega\) 
{ 
can be written as \( \Gamma=\widebar{\Gamma}_D\cup\widebar{\Gamma}_N \), where 
\( \Gamma_D \) and \( \Gamma_N \) are  disjoint, smooth \((d-1)\)--dimensional  manifolds,  relatively open with respect to \(\Gamma\),
with
\begin{equation}
\label{inter}
\mathcal{H}^{d-1}(\widebar{\Gamma}_D \cap \widebar{\Gamma}_N) = 0,\quad 
\mathcal{H}^{d-1}(\Gamma_D) > 0,\quad \text{and}\quad \mathcal{H}^{d-1}(\Gamma_N) > 0.
\end{equation}
}
 Let
    \[
    j\colon \Gamma_N\to[0,\infty),\enskip\, \psi\colon \Gamma_D\to\mathbb{R},\enskip\, H\colon \Omega\times\mathbb{R}^d\to\mathbb{R},\enskip\, \text{and }\enskip g\colon[0,\infty)\to \R
    \]
    be given functions such that the \(C^1\)--mapping \( p\mapsto H(x,p) \) is convex and that \( g \) is continuous and increasing.
Solve the following system for \((m,u)\).
\begin{equation}
	\label{eq:MFG}
	\begin{cases}
		H(x, Du) = g(m)    & \text{ in } \Omega, \\
		- \Div(m D_pH(x, Du)) = 0 & \text{ in } \Omega,
	\end{cases}
\end{equation}
with the boundary conditions
\begin{equation}
	\label{eq:BC}
	\begin{cases}
		m D_pH(x, Du) \cdot \nu = j(x)                                     & \text{ on } \Gamma_N, \\
		m D_pH(x, Du)\cdot \nu \leq 0 \, \text{ and } \, u(x) \leq \psi(x) & \text{ on } \Gamma_D, \\
		(\psi-u) m D_pH(x, Du) \cdot \nu = 0                               & \text{ on } \Gamma_D.
	\end{cases}
\end{equation}
\end{problem}

The previous problem corresponds to the equilibrium state of a game in a bounded
domain with a boundary divided into two regions: an entry region \(\Gamma_N\) and an exit region
\( \Gamma_D \).
{ 
The first condition in \eqref{inter}
ensures the interface is negligible with respect to the boundary measure.
}

On \(\Gamma_N\), we prescribe the inflow of agents via a Neumann condition.
{ 
The non-homogeneous Neumann condition (i.e., \(j(x) \not\equiv 0\) on \(\Gamma_N\))
prescribes the influx of agents. While models with homogeneous Neumann conditions often
impose a total mass constraint (e.g., \(\int_\Omega m\,dx = 1\)) to prevent the trivial solution
\(m \equiv 0\), our formulation focuses on dynamics driven by specified boundary flows.
}

On \(\Gamma_D\), we enforce a relaxed Dirichlet condition on the value function;
the complementary condition on \(\Gamma_D\) ensures it functions solely as an exit region with $\psi$ as exit cost, avoiding artificial inflows.  We denote by
\[
\Gamma_D^1 := \{x\in\Gamma_D : u(x)=\psi(x)\}
\]
the \emph{contact set} (where the value function attains the exit cost) and by
\[
\Gamma_D^0 := \Gamma_D\setminus\Gamma_D^1
\]
the \emph{non-contact set}. In the non-contact set, the condition $(\psi-u) m D_pH(x, Du) \cdot \nu = 0 $ implies zero flow $m D_pH(x, Du) \cdot \nu = 0$, thus preventing artificial inflows through the exit that can arise if we impose a strict Dirichlet boundary condition.
Additional assumptions on functions \(H\), \(g\), \(j\) and \(\psi\) are given in Section~\ref{sec:Assumptions}.
In Section~\ref{subsec:example1111}, we present several examples that illustrate key phenomena predicted by our model, such as the formation of empty regions.

\subsection{The derivation of the model} \label{Intro-2Section}

{ 

Stationary Mean-Field Games (MFGs), such as Problem~\ref{problem:1}, are often interpreted as equilibrium configurations of long-duration games and are frequently derived from ergodic formulations involving long-term average costs~\cite{ll1}. 
In the present work, however, the underlying control problem is an \emph{optimal stopping (exit-time)} problem: the time horizon is the endogenous exit time \(T_E\), rather than a fixed terminal time \(T\). As a consequence, an ergodic long-time-average reformulation is not natural in this setting. Instead, the stationary MFG system is interpreted as the \emph{steady-state (time-independent equilibrium)} of the associated time-dependent exit-time mean-field game. 

Below, we provide a heuristic derivation showing how the PDE system~\eqref{eq:MFG}--\eqref{eq:BC} formally arises as the stationary state of a coupled time-dependent optimal control and transport problem, following standard MFG modeling frameworks (see, e.g.,~\cite{GPV}).

For simplicity, we focus on the special case where:
\[
    H(x,p) = \frac{1}{2}|p|^2, \quad g(m) = m.
\]
The corresponding Lagrangian is:
\[    L(v) = \sup_{p} \{-p \cdot v - \frac{1}{2}|p|^2\} = \frac{1}{2}|v|^{2}.
\]
We consider the setting of Problem~\ref{problem:1}, namely
a domain $\Omega \subset \mathbb{R}^d$ with boundary $\Gamma = \overline{\Gamma}_D \cup \overline{\Gamma}_N$, let $j \geq 0$ be a $C^1$ function defined on $\Gamma_N$, and $\psi$ a continuous function defined on $\Gamma_D$.

\subsubsection{The Optimal Control}

Consider a population whose density \( m(t,x) \) is known for all time \( t>0 \), and consider a typical player in this population whose trajectory \( \mathbf{x} \) is governed by the following dynamics:
\begin{equation}
 \label{dyn}
    \begin{cases}
			\dot{\mathbf{x}}(\tau) = \mathbf{v}(\tau,\mathbf{x}(\tau)) & \text{for } t< \tau< T_E \\
			\mathbf{x}(t) = x &\mathbf{x}(T_E) \in \Gamma_D.
		\end{cases}
\end{equation}
The effect of the population on the player is two-fold. First, the player is allowed to choose \( \mathbf{v} \) only from the admissible set
\[
    \mathcal{V} = \{ \mathbf{v} \in C^0([t,+\infty)\times \widebar{\Omega},\R^d) \colon -m(t,x)\mathbf{v}(t,x)\cdot \nu = j(x) \text{ on  } \Gamma_N, \enskip m\mathbf{v} \cdot \nu \geq 0 \text{ on  } \Gamma_D\},
\]
which enforces the incoming flow condition \( j \) and prevents (normal) entry through \( \Gamma_D \).
Second, the player optimizes the control according to the following optimal control problem.
\begin{equation}\label{eq:minimize_v}
\underset{ \mathbf{v} \in \mathcal{V} }{ \text{Minimize} } \quad \mathcal{J}[\mathbf{v}; t,x] := \int_{t}^{T_E} 
\left(
\frac{1}{2} |\mathbf{v}(\tau,\mathbf{x}(\tau))|^{2} + m(\tau,\mathbf{x} (\tau)) \right)\d \tau
 + \psi(\mathbf{x}(T_E)),
\end{equation}
where the exit time \( T_E \) is chosen by the player such that
 $\mathbf{x}(T_E)\in \Gamma_D$.
Under enough smoothness assumptions, the classical theory of optimal control ensures that the \emph{value function}
\begin{equation*}
	u(t,x) := \min_{\mathbf{v}} \mathcal{J}[\mathbf{v};t,x],
\end{equation*}
solves the Hamilton-Jacobi equation
\begin{equation} \label{eq:derivedHJ1A}
 -\del_t u(t,x) + \frac{1}{2}| D  u(t,x)|^2 - m(t,x)= 0 \quad \text{ for } t \in (0,\infty), \,  x \in  \Omega.
\end{equation}
Furthermore, it also gives us the optimal velocity field in the feedback form
\begin{equation} \label{eq:derivedOC1A}
	\mathbf{v}^* := - D  u \in \mathcal{V}.
\end{equation}

In the optimal control problem \eqref{eq:minimize_v}, the trajectories \(\mathbf{x}(t)\) evolve within \(\bar{\Omega}\) according
to \eqref{dyn}. When a player reaches the boundary in a transversal direction---i.e., the velocity has a strictly positive normal component--- the player is forced to exit immediately. In this case, \( \mathbf{v}(T_E, \mathbf{x}(T_E))\cdot \nu>0 \) and the player must incur the exit cost \( u(T_E,\mathbf{x}(T_E))= {\psi(\mathbf{x}(T_E))} \).
If a player reaches the boundary tangentially, they are not compelled to exit immediately; instead, they may continue along the boundary—potentially traversing parts of
\(\Gamma_N\) where \(j=0\), and choose a later optimal exit time,
$T_E$.
Because tangential motion does not mandate an immediate exit, the boundary conditions must not force the value function to exactly match the exit cost, rendering a strict Dirichlet boundary condition on \(\Gamma_D\) unsuitable. Instead, we impose the \emph{relaxed} boundary condition
\[
  u(t,x) \;\le\; \psi(x),
  \quad x \,\in\, \Gamma_D,
\]
thereby accounting for the possibility of the absence of an enforced exit at those points.

Thus, $u$ solves \eqref{eq:derivedHJ1A} with the boundary conditions:
\begin{equation} \label{eq:derivedBC1}
 \begin{cases}
     m(t,x) D  u(t,x)\cdot \nu(x) = j(x) & \text{ for } t \in (0,\infty), \,  x \in\Gamma_N\\
       u(t,x)\leq { \psi(x)},
       \quad { m(t,x) } D  u(t,x)\cdot \nu(x) \leq 0 & \text{ for } t \in (0,\infty), \,  x \in\Gamma_D \\
    { (\psi(x) - u(t,x))} { m(t,x) }  D  u(t,x)\cdot \nu(x) =0 & \text{ for } t \in (0,\infty), \,  x \in\Gamma_D
 \end{cases}
\end{equation}
Here, the normal derivative is weighted by the density \(m(t,x)\) to ensure that the flux condition is enforced only at boundary points where agents are present. In particular, if \(m(t,x)=0\) at a point on the boundary, the normal velocity is immaterial, and the flux condition is automatically satisfied.

\subsubsection{Transport}

The second part of the modeling concerns the
collective dynamics of the population. If all agents act optimally, their trajectories will follow the optimal velocity field \eqref{eq:derivedOC1A}. Thus, the initial population density \( m(0,x) = m_0 \) is transported by the flow \( \Phi \) of 
the vector field \( - D  u(t,x) \); that is, 
the flow field of the ODE
\[
    \dot{\mathbf{x}}(t) = - D  u(t,\mathbf{x}(t)) \qquad \text{for } t>0.
\]
Here, for this discussion, we assume that $Du$ is extended to $\R^d$ and the trajectories are stopped at the first time they exit $\Omega$. Hence, for each time $t$ the flow maps $\Omega$ to $\bar \Omega$. 
More explicitly, we define
\[
    \int_{\Omega} \varphi(x) \, m(t,x) \dx = \int_{\Omega} \varphi(\Phi (t,y)) \, m_0(y) \dy.
\]
Differentiating with respect to \( t \), we obtain
\[
    \begin{split}
        \int_{\Omega} \varphi(x) \, \del_t m(t,x) \dx &=
        \int_{\Omega} \del_t \Phi(t,y) \cdot D\varphi(\Phi (t,y))\, m_0(y) \dy\\
		&= -\int_{\Omega}  D  u(t,\Phi(t,y)) \cdot D\varphi(\Phi(t,y)) \, m_0(y) \dy.
    \end{split}
\]
Applying the definition of $m(t,\cdot)$ again, we obtain
\[
        \int_{\Omega} \varphi(x) \, \del_t m(t,x) \dx = -\int_{\Omega}  D  u(t,x) \cdot D\varphi(x)\, m(t,x) \dx,
\]
for all \( \varphi \in C_c^{\infty}(\Omega)\).
This identity is the weak formulation of the transport equation:
\begin{equation} \label{eq:derivationMassTransportA}
	\del_t m(x,t) - \Div( D  u(t,x) \, m(t,x)) = 0.
\end{equation}

\subsubsection{Stationary State}
In the stationary case,  the value function \(u(x)\) and the density \(m(x)\) become time-independent, and the Hamilton–Jacobi equation \eqref{eq:derivedHJ1A} reduces to
\[
\frac{1}{2}| D  u(x)|^2 - m(x) = 0,
\]
while the transport equation \eqref{eq:derivationMassTransportA} becomes
\[
-\Div\big(m(x) D  u(x)\big) = 0.
\]
Together with the boundary conditions \eqref{eq:derivedBC1}, these equations are the stationary mean field game system described in Problem~\ref{problem:1}.
}

\subsection{Background and prior work}
{ 
The MFG model \eqref{eq:MFG}--\eqref{eq:BC} falls within the class of differential games modeling the distribution of large populations of competing rational agents. While the concept of a continuum of agents originated in economics with Aumann~\cite{aumann} and was developed further through anonymous sequential games~\cite{ASG,berginAnonymousSequentialGames1992}, these early discrete-time or static formulations remained largely separate from the PDE community (see \cite{Caines2017}). The modern, continuous-time MFG framework emerged independently around 2006 through two distinct but complementary approaches. Lasry and Lions introduced a PDE-based formulation centered on the coupling of Hamilton--Jacobi and Fokker--Planck equations~\cite{ll1,ll2,lasryMeanFieldGames2007}. Simultaneously, Huang, Malham{\'e}, and Caines developed the Nash Certainty Equivalence principle from a stochastic control perspective, focusing on the limit of large finite populations~\cite{huangLargePopulationStochastic2006,Caines2}. In both frameworks, the core mechanism involves a single representative agent minimizing a cost functional that depends on the statistical distribution of the entire population.

In these games, the model is governed by two primary unknowns: the value function $u$, representing the optimal cost for a typical agent, and the population density $m$. As seen in our model~\eqref{eq:MFG}, these quantities are determined by a coupled system comprising a Hamilton--Jacobi equation, which characterizes the optimal control, and a transport or Fokker--Planck equation governing the density evolution. While the Hamilton--Jacobi equation may be of first or second order, we focus here on first-order models arising from deterministic agent dynamics. The first-order problem with local coupling was analyzed in depth in \cite{cardaliaguetMeanFieldGames2014}, who established the existence and uniqueness of weak solutions under periodic boundary conditions using a variational approach (see also~\cite{graber2017sobolev} for subsequent Sobolev regularity results). Crucially, this variational structure implies that the MFG behaves as a potential game, where the PDE system emerges as the first-order optimality condition of a global optimization problem \cite{lasryMeanFieldGames2007}.

Second-order models, distinguished by the presence of a Laplacian or elliptic term, govern scenarios where agent states are random processes driven by Brownian noise (see~\cite{GPV} for a detailed exposition).  These stationary games are rigorously interpreted as the equilibrium states of long-duration interactions, often derived from ergodic models focusing on the optimization of long-term average costs~\cite{ll1}.

Lasry and Lions initially formulated MFGs under periodic boundary conditions~\cite{ll1,ll2}. Subsequent research has adapted these models to bounded domains, employing either Neumann or Dirichlet conditions to address specific applications. In the first-order setting,  \cite{FGT1} used monotonicity methods to establish weak solutions for the Dirichlet problem.  However, pure Dirichlet formulations cannot strictly prescribe agent inflow, potentially allowing boundary segments to function ambiguously as both entrances and exits depending on the internal dynamics. Despite this limitation, Dirichlet conditions have been successfully applied to specific scenarios such as pedestrian evacuation~\cite{MR4732741} and optimal stopping problems~\cite{bertucciOptimalStoppingMean2018}. For second-order systems, Neumann conditions have been analyzed by Cirant~\cite{cirant} and M{\'e}sz{\'a}ros and Silva~\cite{AMFS}. Notably, the latter work, along with studies on density constraints~\cite{San12,San16}, employs variational techniques to derive the MFG system from optimality conditions. 

}

While prior research has separately addressed MFGs with either first-order Hamilton--Jacobi equations or isolated Dirichlet/Neumann boundary conditions, no study has yet formulated a model that simultaneously incorporates prescribed Neumann inflow and Dirichlet exit constraints. Our work fills this gap by developing a rigorous variational framework that guarantees the existence (and partial uniqueness) of solutions under explicit structural assumptions.

 \subsection{Main results}

This paper demonstrates that incorporating nonstandard mixed boundary conditions, assigning one part of the boundary for agent entry (Neumann conditions) and another for agent exit (relaxed Dirichlet conditions), results in a well-defined stationary MFG formulation. Under minimal regularity assumptions, we introduce a notion of weak solutions (Definition~\ref{def:WeakSolution}) consistent with prior literature and accommodating these mixed boundary conditions.

Building upon this concept, we develop a variational formulation that naturally incorporates the desired boundary behaviors. This approach allows us to:
\begin{itemize}
\item Establish the existence of solutions;
\item Prove a partial uniqueness result (uniqueness of the gradient where the density is positive);
\item Identify the minimal regularity conditions required for well-posedness and, in particular, provide a meaning for the Neumann boundary conditions.
\end{itemize}

In Section~\ref{sec:Assumptions}, we detail our technical assumptions,
define our weak solution framework, and outline formal PDE estimates that motivate our choice of functional spaces. A critical technical challenge is rigorously interpreting the Neumann boundary conditions, as $m D_pH(x, Du)$ is only guaranteed to be in a Lebesgue space. However,  because it is divergence-free, we can interpret the Neumann boundary conditions through the normal trace (see Theorem~\ref{thm:mainNormalTrace}).

To illustrate the limitations of standard Dirichlet conditions, Section~\ref{sec:examples} discusses concrete examples. Moreover,
additional examples illustrate phenomena such as the formation of regions with vanishing density (see Section~\ref{subse:example-2d}).

We now introduce our variational formulation, which naturally yields the appropriate boundary conditions:
\begin{problem} [\bfseries The variational formulation]\label{problem:2}
Let \(\Omega \subset \mathbb{R}^d\) be a bounded, open, and connected \(C^1\)--domain whose boundary \(\Gamma = \partial \Omega\) is partitioned into disjoint sets \( \Gamma_D \) and \( \Gamma_N \); that is, \( \Gamma=\widebar{\Gamma}_D\cup\widebar{\Gamma}_N \). Let
    \[
    j\colon \Gamma_N\to[0,\infty),\quad \psi\colon \Gamma_D\to\mathbb{R},\quad H\colon \Omega\times\mathbb{R}^d\to\mathbb{R},\quad \text{and}\quad G\colon \mathbb{R}\to\mathbb{R}
    \]
    be given functions such that the \( C^1\)--mapping \( p\mapsto H(x,p) \) is convex and that \( G \) is \( C^1\), convex, and increasing.
Minimize the functional
\begin{equation}\label{eq:Minimization}
	\mathcal{I}[w]:= \int_\Omega G\left( H(x, Dw) \right)\dx - \int_{\Gamma_N} j w \d s,
\end{equation}
{ 
over the admissible set
\[ \mathcal{U}=\{w\in W^{1,\gamma}(\Omega): w(x)\le\psi(x)\text{ for }x\in\Gamma_D\}. \]
The exponent \(\gamma\) and detailed assumptions are given in Section~\ref{sec:Assumptions}.}

\end{problem}

Our main result establishes that solutions to the MFG system (Problem~\ref{problem:1}) exist and that they correspond precisely to the minimizers of Problem~\ref{problem:2}.
To achieve this,
in Section~\ref{sec:VarProblem}, we prove the following existence theorem:
\begin{thm}\label{thm:var_prob_exist}
	Under Assumptions~\ref{assume:data1}-\ref{assume:H} and Assumption~\ref{assume:G}, there exists \(u \in \mathcal{U}\) such that
	\[ \begin{split}
			\mathcal{I}[u] = \inf_{w \in \mathcal{U}} \mathcal{I}[w].
		\end{split}\]
\end{thm}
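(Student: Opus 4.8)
The plan is to apply the direct method of the calculus of variations. First I would check that the problem is well posed: the admissible set $\mathcal{D}$ is nonempty (for instance, any sufficiently negative constant lies in $\mathcal{D}$ provided $\psi$ is bounded below, as guaranteed by Assumption \ref{ass:psi}), and on such a competitor $\mathcal{I}$ is finite, so $\inf_{\mathcal{D}}\mathcal{I} < +\infty$. The four steps are then: (i) show $\mathcal{I}$ is bounded below and coercive, so that $\inf_{\mathcal{D}}\mathcal{I} > -\infty$ and minimizing sequences are bounded in $W^{1,\gamma}(\Omega)$; (ii) extract a weakly convergent subsequence $w_n \rightharpoonup u$; (iii) verify that $\mathcal{D}$ is weakly closed, so $u \in \mathcal{D}$; and (iv) verify that $\mathcal{I}$ is weakly lower semicontinuous, so that $\mathcal{I}[u] \leq \liminf_n \mathcal{I}[w_n] = \inf_{\mathcal{D}}\mathcal{I}$.

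I expect coercivity to be the main obstacle, precisely because the only constraint defining $\mathcal{D}$ is the one-sided bound $w \leq \psi$ on $\Gamma_D$, so the usual Poincaré inequality is not directly available. The growth hypotheses on $G$ and $H$ should yield a pointwise lower bound of the form $G(H(x,p)) \geq c_0|p|^\gamma - c_1$, whence $\int_\Omega G(H(x,Dw))\,dx \geq c_0\|Dw\|_{L^\gamma(\Omega)}^\gamma - c_1|\Omega|$. For the boundary term I would split $w = (w - \bar w) + \bar w$, where $\bar w$ is the mean of $w$; the Poincaré--Wirtinger inequality controls $\|w - \bar w\|_{L^\gamma}$ by $\|Dw\|_{L^\gamma}$, and the compact trace embedding $W^{1,\gamma}(\Omega) \hookrightarrow L^\gamma(\Gamma)$ together with Hölder's inequality bounds $\big|\int_{\Gamma_N} j (w - \bar w)\,ds\big|$ by $C\|Dw\|_{L^\gamma}$. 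The remaining scalar contribution is $-\bar w \int_{\Gamma_N} j\,ds$; here the two sign conditions combine decisively: the constraint $w \leq \psi$ on $\Gamma_D$ bounds $\bar w$ from above (up to a multiple of $\|Dw\|_{L^\gamma}$, again via the trace and Poincaré inequalities), while $j \geq 0$ ensures $\int_{\Gamma_N} j\,ds \geq 0$, so $-\bar w \int_{\Gamma_N} j\,ds$ is bounded below by $-(A + B\|Dw\|_{L^\gamma})$ for constants $A,B$. Collecting these estimates gives
\[
\mathcal{I}[w] \geq c_0\|Dw\|_{L^\gamma(\Omega)}^\gamma - C\big(1 + \|Dw\|_{L^\gamma(\Omega)}\big),
\]
and since $\gamma > 1$ the superlinear term dominates, proving both that $\mathcal{I}$ is bounded below and that $\|Dw_n\|_{L^\gamma}$ stays bounded along any minimizing sequence. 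When $\int_{\Gamma_N} j\,ds > 0$ this forces $\bar w_n$ to be bounded as well; in the degenerate case $j \equiv 0$ the functional is invariant under additive constants, and I would simply normalize the minimizing sequence (e.g. so that $\bar w_n$ is fixed) to recover a bound in the full $W^{1,\gamma}$ norm.

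The remaining steps are more routine. The set $\mathcal{D}$ is convex and strongly closed --- convexity is immediate from the linear constraint, and strong closedness follows from continuity of the trace operator --- hence $\mathcal{D}$ is weakly closed, giving $u \in \mathcal{D}$. For lower semicontinuity, I would observe that the integrand $p \mapsto G(H(x,p))$ is convex: $H$ is convex by hypothesis and $G$ is convex and nondecreasing (its derivative $G'$ is the left-inverse of the nondecreasing function $g$, hence nondecreasing, by the definition in Problem \ref{problem:2}), so the composition is convex. Tonelli's theorem then yields weak lower semicontinuity of $w \mapsto \int_\Omega G(H(x,Dw))\,dx$. Finally, the boundary term $w \mapsto \int_{\Gamma_N} j w\,ds$ is linear and, thanks to the compactness of the trace embedding $W^{1,\gamma}(\Omega) \hookrightarrow L^\gamma(\Gamma)$, is weakly continuous; therefore $\mathcal{I}$ is weakly lower semicontinuous. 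Combining (i)--(iv) produces a minimizer $u \in \mathcal{D}$, completing the proof.
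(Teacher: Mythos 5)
Your proposal follows the same skeleton as the paper's proof: the direct method, with coercivity extracted from the interplay of the growth bound \(G(H(x,p)) \geq c_0|p|^\gamma - c_1\), the sign condition \(j\geq 0\), and the one-sided constraint on \(\Gamma_D\); then reflexivity, weak closedness of \(\mathcal{D}\), and weak lower semicontinuity via convexity. Where you genuinely differ is in how the boundary term is tamed. The paper (Lemma \ref{lem:lower-bound-for-I}) estimates \(\int_{\Gamma_N} jw\) by writing \(j(w-\psi) \leq j(w-\psi)^+\) and applying the trace theorem and Poincar\'e inequality to \((w-\psi)^+\), which vanishes on \(\Gamma_D\); you instead split \(w = \bar w + (w-\bar w)\), control the oscillation by Poincar\'e--Wirtinger plus the trace theorem, and bound \(\bar w\) from above using the constraint and \(|\Gamma_D|>0\). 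Both routes yield \(\int_{\Gamma_N} jw \leq C(1+\|Dw\|_{L^\gamma})\) and hence the same gradient bound. Your version has a concrete advantage: it makes explicit how the gradient bound upgrades to a full \(W^{1,\gamma}\) bound along a minimizing sequence, since \(\bar w_n\) is bounded above by the constraint and bounded below by the energy bound once \(\int_{\Gamma_N} j\,ds>0\). The paper asserts this upgrade in one line (``by using Poincar\'e inequality \dots\ \(\|w_n\|_{W^{1,\gamma}}\leq C\)''), but the Poincar\'e argument of Lemma \ref{lem:lower-bound-for-I} only controls \((w_n-\psi)^+\), not the negative part; your mean argument is what actually closes that step. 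The remaining steps (Tonelli and Mazur in your write-up, versus the paper's first-order convexity inequality at the limit and testing traces against positive \(f\in L^{\gamma'}(\Gamma_D)\)) are interchangeable.

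The one genuine soft spot is your degenerate case \(j\equiv 0\) (equivalently \(\int_{\Gamma_N}j\,ds=0\), since \(j\geq 0\)). Re-centering a minimizing sequence by constants is not innocuous: shifting \(w_n\) upward to fix \(\bar w_n\) can violate \(w_n\leq \psi\) on \(\Gamma_D\), while shifting downward preserves admissibility but does not bound the mean; and if you normalize to mean zero and pass to a weak limit \(u\), you can only re-enter \(\mathcal{D}\) by subtracting a constant when the trace of \(u-\psi\) is essentially bounded above on \(\Gamma_D\), which can fail when \(\gamma\leq d\) (traces in \(W^{1-\frac{1}{\gamma},\gamma}(\Gamma_D)\) need not be bounded). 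So this case requires either an additional hypothesis (e.g. \(\int_{\Gamma_N}j\,ds>0\), or boundedness of \(\psi\) and of admissible traces) or a different argument. To be fair, the paper's own proof never separates this case and tacitly relies on the same unjustified norm bound, so your write-up is, if anything, more candid about where the difficulty sits. One small slip: Assumption \ref{ass:psi} only gives \(\psi\in W^{1,\gamma}(\Omega)\), not boundedness below, so the clean witness for \(\mathcal{D}\neq\emptyset\) is simply \(w=\psi\).
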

The proof employs the direct method of calculus of variations, addressing the key challenge of establishing coercivity under mixed boundary conditions. Moreover, this variational framework provides
a justification of the
regularity estimates in Section~\ref{sec:formalEstimates}
and is
crucial for the existence of the normal trace for the flux, thus giving precise meaning to the Neumann boundary condition (see Theorem~\ref{thm:mainNormalTrace}).

Furthermore,
we have the following correspondence theorem:

\begin{theorem}\label{thm:correspondance}
Under the Assumptions~\ref{assume:data1}-\ref{assume:g} and Assumption~\ref{assume:G}, suppose that \( G \) and \( g \) satisfy
    \[
        G'(g(\mu)) = \mu \qquad \text{for all }\, \mu \geq 0.
    \]
    Then, a pair \( (m, u) \) is a weak solution to Problem~\ref{problem:1} in the sense of Definition~\ref{def:WeakSolution} if and only if \(u\) is an admissible minimizer of Problem~\ref{problem:2} and \( m = G'(H(x,Du)) \).
\end{theorem}
The detailed proof of this correspondence appears in Section~\ref{sec:Correspondance}.

Nonetheless, the variational formulation naturally gives rise to the contact-set condition on the boundary \( \Gamma_D \), namely \((\psi-u) m D_pH(x, Du) \cdot \nu = 0\). This condition, combined with the fact that the operator associated with the MFG \eqref{eq:MFG}--\eqref{eq:BC} is monotone, helps us to prove the uniqueness result for the measure $m$, as well as a partial uniqueness result for the gradient of the value function, $u$.

\begin{thm}\label{thm:uniquenessMFG}
    Under Assumptions~\ref{assume:data1}-\ref{assume:g}, any two pairs  \((m,u), \, (\eta, v ) \in L^{\b+1}(\Omega) \times W^{1,\gamma}(\Omega)\) that solve Problem~\ref{problem:1}, in the sense of Definition~\ref{def:WeakSolution}, must satisfy the following.
	\begin{enumerate}
		\item For almost all \(x\in \Gamma_D\), we have that
		      \[ \begin{split}
				      (\psi-u)\, \eta \, D_pH(x,D v )\cdot \nu = (\psi- v ) \, m \, D_pH(x,Du)\cdot \nu = 0.
			      \end{split}\]

		\item The equality \(\eta(x) = m(x)\) holds for almost all \(x \in \Omega\).
		\item { The equality \(Du = D v\) holds for almost all \(x \in E_+(m)=\{ x\in \Omega\,:\, m(x) >0\}\).
}

	\end{enumerate}
\end{thm}

The proofs of these uniqueness results are detailed in Section~\ref{sec:monoton_uniquenessMFG}, while the uniqueness of minimizers for Problem~\ref{problem:2} is addressed in Section~\ref{73}. The paper concludes by discussing the Neumann trace in Appendix~\ref{appendix_A}.

\section{Assumptions, Preliminary Results, and  Weak Solutions} \label{sec:Assumptions}

In this section, we lay out our notation, prescribe the assumptions used to prove our results, and give a suitable definition of weak solutions for Problem~\ref{problem:1}.
These assumptions ensure the well-posedness of the variational formulation and enable us to define the boundary conditions in the MFG rigorously.

\subsection{Notation}

Given an exponent \(\tau>1\), \(\tau'\) denotes its convex (H\"older) conjugate; that is,
     \[
     \dfrac{1}{\tau} + \dfrac{1}{\tau'} = 1.
     \]
A differential operator written without a subscript (e.g., \( Du \) or \( \text{div} F \)) refers to differentiation with respect to the variable \( x \). Differentiation with respect to other variables will be indicated by a corresponding subscript. For example, \( D_p H(x, p) \) denotes differentiation with respect to the second argument, \( p \), even when $p$ is replaced by \( Du \).
For a quantity $f$, \( f_+ \) and \( f_- \) are the positive and negative parts of $f$.

Throughout, we adopt the convention that any inequality involving a constant $C$ holds for any sufficiently large $C>0$ independent of the free variables; thus, replacing $C$ with a larger constant does not affect the validity of the inequality.
If the constant depends on any free variable (e.g., \( m \)), this dependence is indicated with a suitable subscript (e.g., \(C_m\)) while maintaining the same interpretation.
This convention minimizes unnecessary notation and allows for \( C \) to vary from one line to another.

\subsection{Assumptions}

We begin by introducing the exponents we use to formulate our main assumptions and results. Assumption \ref{assume:data1} concerns several parameters of the problem, while Assumptions \ref{assume:j} and  \ref{assume:psi}  address the regularity of the boundary data $j$ and $\psi$.
Assumptions \ref{assume:H} and \ref{assume:g} are typical assumptions in this setting on the growth of $H$, its derivatives, and $g$. Finally,
Assumption \ref{assume:G} concerns the function $G$ in the variational problem, Problem \ref{problem:2}. The correspondence between Assumption \ref{assume:g} and \ref{assume:G} is examined in Subsection \ref{consistency}.

\begin{assumption}\label{assume:data1} Our assumptions are formulated in terms of two key exponents:
    \begin{equation}\label{eq:exp_a_b}
    \alpha > 1 \, \text{ and } \, \beta > 0.
    \end{equation}
    To simplify the presentation, we introduce the auxiliary exponent:
    \begin{equation}\label{eq:exp_gamma}
        \gamma := \dfrac{\beta + 1}{\beta} \alpha.
    \end{equation}
\end{assumption}
The exponent $\gamma$ is the Sobolev exponent for 
$u$; its conjugate is used for boundary data $j$. 
The data in the Neumann and Dirichlet boundary conditions satisfy the following assumptions.
\begin{assumption}\label{assume:j}
	The incoming flow \( j \in  L^{\gamma'}(\Gamma_N)\) is nonnegative, and does not vanish identically; that is, \( j\geq 0 \) and \( j\not \equiv 0 \) on $\Gamma_N$.
\end{assumption}

This assumption ensures that the integral $\int_{\Gamma_N} j u \, ds$ is well defined (by the $L^{\gamma'}_{\Gamma_N}$ integrability) and that $j$ is nonnegative (implying that $\Gamma_N$ functions as an entry region).

\begin{assumption}\label{assume:psi}
{ We assume that the exit cost defined on \(\Gamma_D\) admits an extension,
also denoted by \(\psi\), such that \(\psi \in W^{1,\gamma}(\Omega)\).}

\end{assumption}
\begin{remark*}

\begin{enumerate}[label=\tiny$\diamond$, left=0pt ]
     \item The assumption that \( \psi \) is defined on the whole domain $\Omega$ is useful for establishing a lower bound for the variational functional.

     \item By standard trace theorems, this assumption
     { implies that the trace of \(\psi\) on the boundary belongs to
\(W^{1-\frac{1}{\gamma},\gamma}(\Gamma)\), which imposes minimal regularity for the well-posedness of our problem.
}

\end{enumerate}
\end{remark*}

The following assumption pertains to the Hamiltonian \( H \), which governs the interior dynamics of the MFG (Problem~\ref{problem:1}) and also appears in the variational formulation (Problem~\ref{problem:2}).

\begin{assumption}\label{assume:H}
The Hamiltonian \(H:\widebar{\Omega} \times \mathbb{R}^d\to \R\) satisfies the following conditions.
    \begin{enumerate}[label = \normalfont\roman*., ref=\theassumption.\roman*]

		\item\label{assume:H_i} The maps \((x,p)\mapsto H(x,p)\) and \((x,p)\mapsto D_pH(x,p)\) are measurable in \( x \) and continuous with respect to \( p \).

		\item\label{assume:H_ii} For almost every \(x \in \Omega\), the map \(p \mapsto H(x,p)\) is strictly convex.

		\item\label{assume:H_iii} There is a constant \(C>1\) such that, for almost all \(x \in \Omega \) and all \(p \in \mathbb{R}^d\),
        \[
            C^{-1}\,|p|^{\a}-C \leq H(x,p) \leq  C |p|^\a+C.
        \]
        \item\label{assume:H_vi_Now} There is a constant \(C>1\) such that, for almost all \(x \in \Omega \) and all \(p \in \mathbb{R}^d\),
        \[
		  | D_pH(x,p)| \leq C(|p|^{\a-1} + 1).
        \]

        \item\label{assume:H_vi} There is a constant \(C>1\) such that, for almost all \(x \in \Omega \) and all \(p \in \mathbb{R}^d\),
        \[
            - H(x,p) + D_pH(x,p) \cdot p  \geq C^{-1}\,|p|^{\a} -C.
        \]
	\end{enumerate}
\end{assumption}
\begin{remark*}
\begin{enumerate}[label=\tiny$\diamond$, left=0pt ]
    \item These conditions ensure that the integrands appearing in our treatment are well-defined and properly contained in suitable \( L^p \)-spaces.

    \item Assumption~\ref{assume:H_vi} provides a link to the underlying optimal control problem, whose Lagrangian is retrieved by the Legendre transform
    \[
        L(x,v) = \sup_{p} \big( -v \cdot p - H(x,p) \big).
    \]
    When the control \( v \) is optimal, it has the form \( v^* = -D_p H(x,p) \). { Hence,
    \[
        L(x,v^*) = D_p H(x,p) \cdot p - H(x,p),
    \]
    is coercive by Assumption \ref{assume:H_vi}.}

    \item An \textbf{example} of a Hamiltonian that satisfies the assumptions is
    \[
        H(x,p) = (|p|^2 + 1)^{\alpha/2} + 1.
    \]
\end{enumerate}
\end{remark*}

The following assumption pertains to the coupling term \( g \), which appears in the MFG system (Problem~\ref{problem:1}) as a \emph{density-dependent potential} that deters agents from high-density regions.

\begin{assumption} \label{assume:g} The coupling term \( g \) satisfies the following conditions.
    \begin{enumerate}[label = \normalfont\roman*., ref=\theassumption.\roman*]

		\item The map \( g :[0,\infty)\to \R \) is continuous and strictly increasing.
		\item\label{assume:g_ii} There is constant \( C>1 \) such that
		  \[
		      \frac{m^{\b }}{C} -C \leq g(m) \leq  C m^{\b} + C \quad \text{ for all}\enskip m \in [0,\infty) .
		  \]
	\end{enumerate}
\end{assumption}
\begin{remark*}
\begin{enumerate}[label=\tiny$\diamond$, left=0pt ]
     \item  These conditions ensure that the integrands appearing in our treatment are well-defined and properly contained in suitable \( L^p \)--spaces.

     \item  The strict monotonicity assumption makes \( g \) invertible, allowing us to establish the variational formulation of the MFG problem.

     \item  An \textbf{example} of a coupling term that satisfies the assumptions is
     \begin{equation} \label{eq:Example:g}
        g(m)= (m+1)^{\beta}.
     \end{equation}
\end{enumerate}
\end{remark*}

For the treatment of the variational problem, it is convenient to express our assumptions explicitly in terms of \( G \). The following assumptions are equivalent to Assumption~\ref{assume:g} above due to the Definition~\ref{def:extInverse}, which we state after the assumption.
This equivalence is established in Lemma~\ref{lem:EquivgG}.

\begin{customAssumA}{5'}\label{assume:G}
	There is a constant $z_0 \in \R$ such that \( G \) satisfies the following conditions.
    \begin{enumerate}[label = \normalfont\roman*., ref=5'.\roman*]
		\item\label{assume:G_i} The map \( G \) is constant for all \( z \leq z_0 \); that is, \( G(z) = G(z_0) \) for all \( z \leq z_0 \).
		\item\label{assume:G_ii} The map \( G :\R \to \R \) is continuously differentiable.
        \item\label{assume:G_iii} The map \( G \) is strictly convex on \( [z_0, \infty) \).
        \item \label{assume:G_iv} The map \( G \) is strictly increasing on \( [z_0, \infty) \).
		\item\label{assume:G_v}  There is a constant \(C>0\) such that, for all \(z \in [z_0, \infty)\),
        \[
            C^{-1}\,z_+^{1/\b}  - C  \leq G'(z) \leq C \, z_+^{1/\b} + C.
        \]
	\end{enumerate}
\end{customAssumA}
\begin{remark*}
\begin{enumerate}[label=\tiny$\diamond$, left=0pt]
     \item Note that Assumption~\ref{assume:G_v} implies that there is a constant \( C>0 \) such that, for all \( z \in [z_0, \infty) \),
        \begin{equation}\label{eq:ImplicOfassume:G_v}
            C^{-1}\, z_+^{(\b+1)/\b} - C  \leq G(z) \leq  C \,z_+^{(\b+1)/\b} + C.
        \end{equation}

     \item Convexity and lower-boundedness, which we obtain from Assumption~\ref{assume:G_i} and \eqref{eq:ImplicOfassume:G_v}, respectively, are standard assumptions to ensure the existence of a minimizer for the functional \eqref{eq:Minimization}.

     \item An \textbf{example} of a map \( G \) that satisfies the assumptions is
     \begin{equation} \label{eq:Example:G}
        G(z) = \begin{cases}
            \frac{\b}{\b+1} z^{(\b+1)/\b}-z+c & z \geq 1 \\
            c-\frac{1}{\b+1} & z < 1,
            \end{cases}
     \end{equation}
    for some arbitrary constant $c\in \R$.
\end{enumerate}
\end{remark*}

\subsection{
{ Consistency between Assumptions \ref{assume:g} and \ref{assume:G}}
}

\label{consistency}

The equivalence between Assumption~\ref{assume:g} and Assumption~\ref{assume:G} is crucial to ensure the
correspondence between Problem~\ref{problem:1} and Problem~\ref{problem:2}
given in Theorem~\ref{thm:correspondance}.
This equivalence depends on the relation
\begin{equation}\label{eq:G'ODEginv}
    G'(z) = g^{-1}(z),
\end{equation}
where \(  g^{-1}(z) \) is the \emph{extended inverse} of \( g \) defined in the following.

\begin{definition} \label{def:extInverse}
	Let \(g:[0,\infty) \to \R \) satisfy Assumption~\ref{assume:g}. We say that \(g^{-1}:\R\to [0,\infty)\) the \emph{extended inverse} of \( g \) if,
    \begin{enumerate}[label=\roman*.]
        \item\label{subdef:extInverse_i} \( g(g^{-1}(z))=z \) \enskip \enskip for all \( z > g(0) \), and
        \item\label{subdef:extInverse_ii}  \( g^{-1}(z) = 0 \) \enskip \enskip for all \( z \leq g(0) \).
    \end{enumerate}
\end{definition}

\begin{remark}
\begin{enumerate}[label=\tiny$\diamond$, left=0pt]
     \item  Note that the exponent governing the growth of \( G \) is \( (\b+1)' = (\b+1)/\b\).

     \item Note also that \( z_0 =g(0)\) in Assumption~\ref{assume:G}.
\end{enumerate}
\end{remark}

\begin{lemma}\label{lem:EquivgG}
    Let \( G \) be defined according to \eqref{eq:G'ODEginv}. Then, Assumption~\ref{assume:g} and Assumption~\ref{assume:G} are equivalent.
\end{lemma}
\begin{proof} \textbf{Step 1 (\ref{assume:g} \( \implies\) \ref{assume:G}):} First, suppose that \( g \) satisfies Assumption~\ref{assume:g}, and set \( z _0 = g(0)\).
    \begin{enumerate}[label= \roman*.]
        \item By Definition~\ref{def:extInverse}, we have that \( G'(z) = g^{-1}(z) = 0 \) for all \( z \leq z_0 \). Hence, \(G\) is constant on \((-\infty,z_0]\). In particular,
                \[
                    G(z) = G(z_0) \quad \text{ for all }\, z \leq z_0.
                \]
        \item Because \( G'= g^{-1} \in C^0(\R) \), we have \( G \in C^1(\R) \).

        \item Because \( g \) is increasing, its inverse \( G' \) is also increasing on \( [z_0,\infty) \). Therefore, \( G \) is convex.

        \item Because \(g\bigl([0,\infty)\bigr) = [z_0,\infty)\), for every \(z > z_0\) there is an \(m > 0\) such that \(z = g(m)\). Consequently,
        \[
            G'(z) = G'\bigl(g(m)\bigr) = m > 0.
        \]
        Hence, \(G\) is strictly increasing on \(\bigl[z_0,\infty\bigr)\).

        \item\label{subProof:Step1:v} For an arbitrary \( z > z_0 \), substituting \( m = G'(z)\) into Assumption~\ref{assume:g_ii} gives us
        \begin{equation}\label{eq:Previ:lmmaGg1}
            \frac{ G'(z)^{\b }}{C} -C \leq g( G'(z)) = z \leq  C \,G'(z)^{\b} + C.
        \end{equation}
        The left-hand side gives us that
        \[
            G'(z) \leq ( Cz+C)^{1/\b} \leq ( Cz_++C)^{1/\b} \leq C z_+^{1/\b}+C.
        \]
        Notice that \(  C \,  G'(z)^{\b} + C \geq 0 \). Thus, the right-hand side in \eqref{eq:Previ:lmmaGg1} gives us that
        \[
            z_+^{1/\b}  \leq  \left(C \, G'(z)^{\b} + C\right)^{1/\b}  \leq C \, G'(z) + C.
        \]
         The two inequalities above imply that
         \begin{equation}\label{eq:Previ:lmmaGg2}
             \frac{z_+^{1/\b}}{C} -C  \leq G'(z) \leq C z_+^{1/\b}+C.
         \end{equation}
    \end{enumerate}

    \textbf{Step 2 (\ref{assume:G} \( \implies\) \ref{assume:g}):} To establish the converse, suppose that \( G \) satisfies Assumption~\ref{assume:G} and note that \( g = (G')^{-1} \) immediately gives us that \( g(0) = z_0 \).

    \begin{enumerate}[label= \roman*.]
        \item Because \( G' \) is continuous, coercive, and strictly increasing on \( [g(0),\infty) \), its inverse \( g \) is continuous and strictly increasing on \( [0,\infty) \).

        \item For an arbitrary \( m \geq 0 \), substituting \( z = g(m)\) into Assumption~\ref{assume:G_v} gives us
        \[
            \frac{(g(m)_+)^{1/\b}}{C} - C  \leq G'(g(m)) = m \leq C(g(m)_+)^{1/\b} + C.
        \]
        With some manipulations similar to the above, we obtain
        \[
            \frac{m^{\b}}{C} - C  \leq g(m) \leq C m^{\b} + C.
        \]
    \end{enumerate}
    This concludes the proof.
\end{proof}

\subsubsection{\texorpdfstring{An auxiliary bound for \( G \).}{An auxiliary bound for G.}}

The following lemma enables us to calculate the Gateaux derivative in Section~\ref{sec:Correspondance}.
\begin{lemma}\label{lem:boundOnDiffGepG}
    Suppose that Assumptions~\ref{assume:H} and~\ref{assume:G} hold, and let \( p_0, p_1 \in \R^d \) be arbitrary. There is a constant \( C >0 \), independent of \( p_0 \) and \(p_1 \) such that
	\[
        \frac{1}{|\ep |}\lvert G\left( H(x,  p_0+\ep  p_1) \right) - G\left( H(x, p_0) \right)\rvert \leq C  |p_0|^{\gamma} + C |p_1|^{\gamma}+C
	\]
    for all $\ep \in (-1,1)$.
\end{lemma}
\begin{proof}
    Let \( \ep \in (-1,1)\) be arbitrary. By the mean value theorem, for every $x \in \Omega$, there is a $\theta_{x,\ep} \in (0,1)$ such that
    \begin{equation}\label{eq:boundOnDiffGepG1.1}
        \begin{split}
            \frac{1}{\ep } \big[G(H(x, p_0+\ep p_1)) &- G( H(x, p_0))\big] = G'(H(x, p_0+ \ep \theta_{x,\ep} p_1)) \,\, D_pH(x, p_0+ \ep \theta_{x,\ep} p_1)\cdot p_1.
        \end{split}
    \end{equation}
    Assumption~\ref{assume:H_vi_Now} and Young's inequality give us that
    \begin{equation}\label{eq:boundOnDiffGepG1.2}
    \begin{split}
        |D_pH(x, p_0+ \ep \theta_{x,\ep} p_1)\cdot p_1|
        & \leq C |p_0|^{\a} + C  |p_1|^{\a}.
    \end{split}
    \end{equation}
    Similarly, from the monotonicity of \( G' \), along with Assumption~\ref{assume:G_v} and Assumption~\ref{assume:H_iii}, we deduce

    \begin{equation}\label{eq:boundOnDiffGepG1.3}
        G'(H(x, p_0+ \ep \theta_{x,\ep} p_1)) \leq C|p_0|^{\a/\b}+ C|p_1|^{\a/\b}+C.
    \end{equation}
    By combining \eqref{eq:boundOnDiffGepG1.1}--\eqref{eq:boundOnDiffGepG1.3} and applying Young's inequality, we obtain
    \[
        \frac{1}{|\ep |}\lvert G\left( H(x,  p_0+\ep  p_1) \right) - G\left( H(x, p_0) \right)\rvert \leq C|p_0|^{\a(\b+1)/\b}+ C|p_1|^{\a(\b+1)/\b}+C.
    \]
    We conclude by recalling that \( \gamma = \a(\b+1)/\b \).
\end{proof}

\subsection{Formal estimates} \label{sec:formalEstimates}

\begin{lemma} \label{lem:lower-bound-for-int-ju}
	Suppose Assumptions~\ref{assume:j} and~\ref{assume:psi} hold. Let \(u \in \cU \) be an admissible function in Problem~\ref{problem:2}. Then, there is a constant \( C > 1\), independent of \(u\), such that
    \[
		\int_{\Gamma_N} j  u \d s \leq C \|Du\|_{L^\gamma(\Omega)}+C.
    \]
\end{lemma}
\begin{proof}
	First, using H\"older's inequality and that \((u-\psi)_+ \equiv 0\) on \(\Gamma_D\), we obtain
	\[
    \begin{split}
        \int_{\Gamma_N}j(u-\psi)_+ &\leq \|j\|_{L^{\gamma'}(\Gamma_N)}
        \enskip \|(u-\psi)_+\|_{L^\gamma(\Gamma_N)} \\
        &= C\, \|(u-\psi)_+\|_{L^\gamma(\Gamma_N \cup \Gamma_D)}.
	\end{split}
    \]
	The right-hand side can be further bounded by the trace theorem, giving us
	\[
			\int_{\Gamma_N}j(u-\psi)_+ \leq C  \|(u-\psi)_+\|_{L^\gamma(\Omega)} +  C \|D(u-\psi)_+\|_{L^\gamma(\Omega)}.
    \]
{ 
Moreover, we observe that \((u-\psi)_+ \in W^{1,\gamma}(\Omega)\).
Since \(u \in \mathcal{U}\), the trace of \((u-\psi)_+\) vanishes on \(\Gamma_D\).
Because \(|\Gamma_D|>0\), we can apply the Poincaré inequality for functions with zero trace on
a portion of the boundary. The proof of this inequality is similar to one of the standard Poincaré inequality in \cite{E6}, using the continuity of the trace in $\Gamma_D$.
}

    This gives us
	\[
			\|(u-\psi)_+\|_{L^\gamma(\Omega)} \leq C \|D(u-\psi)_+\|_{L^\gamma(\Omega)}
    \]
    and, hence,
    \[
		\begin{split}
		  \int_{\Gamma_N}j(u-\psi)_+ &\leq  C \|D(u-\psi)_+\|_{L^\gamma(\Omega)} \\
          &\leq  C \|Du\|_{L^\gamma(\Omega)} + C\|D\psi\|_{L^\gamma(\Omega)}.
		\end{split}
    \]
    Finally, because \(j\geq 0\), we have
	\[
        \int_{\Gamma_N}j(u-\psi) \leq \int_{\Gamma_N}j(u-\psi)_+.
	\]
	Combining the inequalities above, we obtain
	\[
		\int_{\Gamma_N}j(u-\psi) \d s \leq C \|Du\|_{L^\gamma(\Omega)} + C\|D\psi\|_{L^\gamma(\Omega)}.
	\]
	Our claim follows immediately.
\end{proof}

{ 
Before proceeding to the rigorous analysis, we derive formal a priori estimates for classical
solutions. Although these derivations are formal, they are essential for motivating the choice
of functional spaces (\(L^{\beta+1}\) for \(m\) and \(W^{1,\gamma}\) for \(u\)) required for the
weak formulation and foreshadow the rigorous bounds established later in
Section~\ref{sec:VarProblem}.
}

\begin{proposition}\label{pro:apriori_estimate_rev_v2}
	Under Assumptions~\ref{assume:data1}-\ref{assume:g}, there is constant \(C >0\), depending only on our data, such that, for any classical solution \((m, u)\) to Problem~\ref{problem:1}, we have
    \[
        \| m \|_{L^{\beta+1}(\Omega)}\leq C, \quad  \| m|Du|^\a \|_{L^1(\Omega)} \leq C, \quad and \quad  \| Du \|_{L^{\gamma}(\Omega)}\leq C.
    \]
\end{proposition}
\begin{proof}
    Because \( (m,u) \) solves the H-J equation in the system \eqref{eq:MFG} pointwise, almost everywhere, we have that
    \[
        m = G'(H(x,Du)).
    \]
    Subsequently, we also have
    \[
                C^{-1}|Du|^{\a/\b}-C \leq m \leq C|Du|^{\a/\b}+C.
    \]
    This inequality provides us with three auxiliary bounds:
    \begin{equation}\label{eq:Aprior1.1}
        |Du|^{\gamma} \leq ( Cm + C)^{\beta+1}\leq Cm^{\beta+1}+C,
    \end{equation}
    \begin{equation}\label{eq:Aprior1.2}
        m^{\beta+1} \leq (C|Du|^{\a/\b}+C)^{\beta+1} \leq C|Du|^{\gamma}+C,
    \end{equation}
    and
    \begin{equation}\label{eq:Aprior1.3}
        C^{-1}m^{\b+1}-C\leq C^{-1}m(m-C)_+^{\b} \leq  m|Du|^{\a},
    \end{equation}
    where the constants appearing above are independent of \( m \) and \( u \).  Subsequently,
    \[
        C^{-1} \| m \|_{L^{\beta+1}(\Omega)}^{\beta+1} -C \leq \|Du\|_{L^\gamma(\Omega)}^{\gamma} \leq C \| m \|_{L^{\beta+1}(\Omega)}^{\beta+1} + C.
    \]

    Now, testing the transport equation in \eqref{eq:MFG} by \( (u-\psi) \) gives us
\begin{equation}\label{eq:Aprior1.4}
\begin{split}
    LHS =\int_\Omega m\, D_pH(x, Du) \cdot D(u-\psi) \dx &=\int_{\Gamma} (u-\psi) \, m\, D_pH(x, Du) \cdot \nu \d s\\
    &=\int_{\Gamma_N} j  (u-\psi) \d s.
\end{split}
\end{equation}
The left-hand side (LHS) is bounded below due to Assumption~\ref{assume:H} and Young's inequality (with \( \ep\)). More precisely, we have
\[
\begin{split}
LHS &\geq  \int_\Omega m\left(D_pH(x, Du)\cdot Du - |D_pH(x, Du)| |D\psi| \right)\dx\\
    &\geq \int_\Omega  m(C^{-1}|Du|^\a-\ep |Du|^{\a}-C_\ep|D\psi|^{\a}-C_\ep) \dx.
\end{split}
\]
Selecting \(\ep \) appropriately gives us
\[
\begin{split}
    LHS &\geq C^{-1} \int_\Omega  m|Du|^\a\dx-C\int_\Omega  m (|D\psi|^{\a} +1)\dx\\
\end{split}
\]
{ 
Using inequality \eqref{eq:Aprior1.3}, the first term is bounded below by
\(C^{-1}\|m\|_{L^{\beta+1}(\Omega)}^{\beta+1}-C\).
For the second term, we apply Hölder's inequality followed by Young's inequality with \(\varepsilon\):
\begin{align*}
C\int_\Omega m (|D\psi|^{\alpha}+1)\,dx
&\le C \|m\|_{L^{\beta+1}}
   \||D\psi|^{\alpha}+1\|_{L^{(\beta+1)'}} \\
&\le \varepsilon \|m\|_{L^{\beta+1}(\Omega)}^{\beta+1}
   + C_\varepsilon \||D\psi|^{\alpha}+1\|_{L^{(\beta+1)'}}^{(\beta+1)'}.
\end{align*}
Combining these estimates, we obtain
}

\[
\begin{split}
    LHS  &\geq C^{-1} \| m \|_{L^{\beta+1}(\Omega)}^{\beta+1} -\ep \| m \|_{L^{\beta+1}(\Omega)}^{\beta+1} -C_\ep \| |D\psi|^{\a} +1\|_{L^{(\b+1)'}(\Omega)}^{(\b+1)'} \\
\end{split}
\]
Selecting \(\ep \) appropriately gives us
\[
\begin{split}
    LHS &\geq C^{-1} \| m \|_{L^{\beta+1}}^{\beta+1}-C \\
\end{split}
\]
Subsequently, \eqref{eq:Aprior1.4} becomes
\[
\begin{split}
    \int_{\Gamma_N} j  (u-\psi) \d s  &\geq C^{-1} \| m \|_{L^{\beta+1}}^{\beta+1} -C \\
\end{split}
\]
On the other hand, by Lemma~\ref{lem:lower-bound-for-int-ju} and Young's inequality, we have
\[
\begin{split}
    C^{-1} \| m \|_{L^{\beta+1}}^{\beta+1} -C &\leq C \|Du\|_{L^\gamma(\Omega)}+C\\
    &\leq \ep  \|Du\|_{L^\gamma(\Omega)}^{\gamma}+C_\ep\leq \ep C\| m \|_{L^{\beta+1}}^{\beta+1}+C\ep
\end{split}
\]
Selecting \( \ep \) appropriately, we obtain
\[
\begin{split}
    \| m \|_{L^{\beta+1}}^{\beta+1} \leq C.
\end{split}
\]
Note that \( C \) is independent of \( m \) and \( u \). The other two estimates follow immediately using \eqref{eq:Aprior1.1} and \eqref{eq:Aprior1.3}.
\end{proof}

\subsection{The normal trace theorem}

Here, we present the normal trace theorem that ensures the well-definedness of the Neumann boundary condition in a suitable dual space. We briefly overview this theorem, deferring detailed proofs and discussion to Appendix~\ref{appendix_A}. To this end, we first define a function space tailored to our setting.
 \begin{definition} \label{def:DivSobolevSpaceMain}
	The \emph{divergence Sobolev space} \( W^{\gamma'}( \Div; \Omega) \) is the space consisting of all vector fields
	\[
        \mathbf{w} \in L^{\gamma'}(\Omega; \R^d) \quad\text{such that}\quad \Div(\mathbf{w}) \in L^{\gamma'}(\Omega),
    \]
	where \( \Div(\mathbf{w}) \) is interpreted in the weak sense. (See Definition~\ref{def:weakDiv} in the appendix). This space is, naturally, equipped with the norm
	\[
        \| \mathbf{w} \|_{W^{{\gamma'}}( \Div; \Omega)}  := \Big( \|\mathbf{w}\|_{L^{\gamma'}(\Omega; \R^d)}^{\gamma'} + \|\Div(\mathbf{w})\|_{L^{\gamma'}(\Omega)}^{\gamma'}\Big)^{1/{\gamma'}}.
	\]
\end{definition}
 Now, we are ready to state the normal trace theorem.
\begin{thm}\label{thm:mainNormalTrace}
	Let \(\Omega\) be an open, bounded, \(C^1\)--domain. Then, there exists a bounded linear operator
	\[
        \tr_1: W^{\gamma'}( \Div; \Omega) \to W^{-1+\frac{1}{\gamma} , \, {\gamma'}}(\Gamma),
	\]
	satisfying the following.
	\begin{enumerate}[label = {\normalfont (\roman*)}]
		\item For all smooth vector fields \(\mathbf{F}\in C^\infty(\widebar{\Omega};\mathbb{R}^d)\),
        \[
            \tr_1(\mathbf{F}) = \mathbf{F}|_{\Gamma}\cdot \nu.
        \]

		\item  There is a constant \( C >0 \) such that, for all \(\mathbf{F} \in  W^{\gamma'}( \Div; \Omega)\),
        \[
        \| \tr_1(\mathbf{F})\|_{W^{-1+\frac{1}{\gamma}, {\gamma'}}(\Gamma)} \leq C\|\mathbf{F}\|_{W^{\gamma'}( \Div; \Omega)}.
		\]
	\end{enumerate}
\end{thm}
\begin{remark}
\begin{enumerate}[label = \roman*., left=0pt ]
    \item The space \( W^{-1+\frac{1}{\gamma}, \, {\gamma'}}(\Gamma) \) is the dual of \( W^{1-\frac{1}{\gamma}, \, {\gamma}}(\Gamma) \), which is the \emph{trace space} of \( W^{1,{\gamma}}(\Gamma) \) \cite{DiBenedetto1}.

    \item Henceforth, we write \( F\cdot \nu = \tr_1(F)  \), where \( \nu \) is the normal to the boundary \( \Gamma \).
\end{enumerate}
\end{remark}
\begin{proof}
    We defer the proof to our discussion in the Appendix~\ref{appendix_A}.
\end{proof}

\begin{remark}
  The estimates from Section~\ref{sec:formalEstimates}, combined with Assumption~\ref{assume:H}, imply that \( m D_p H (x, Du) \) \( \in L^{\gamma'}(\Omega; \mathbb{R}^d)\).
  Since $\Div(mD_pH(x, Du))=0$, we have \( mD_pH(x, Du)\in W^{\gamma'}( \Div; \Omega) \). Thus,
  the previous theorem ensures that the normal trace $mD_pH(x, Du)\cdot \nu$ is well defined in $\Gamma_N$.
\end{remark}

\subsection{Definition of weak solutions}

We define a weak solution by extending the definition employed in \cite{cardaliaguetMeanFieldGames2014} to include additional constraints that rigorously account for boundary data. A key aspect of this definition is that we enforce boundary conditions in the sense of traces.
More precisely, we define a weak solution as follows.
\begin{definition}
	\label{def:WeakSolution}
	A \emph{weak solution} to Problem~\ref{problem:1} is a pair of functions \((m,u) \in L^{\b+1}(\Omega) \times W^{1,\gamma}(\Omega)\) that satisfies the following conditions.
	\begin{enumerate}[label = (C\arabic*)]
		\setcounter{enumi}{-1}
        \item The density $m$ is nonnegative.

        \item The Hamilton-Jacobi equation in \eqref{eq:MFG} holds in the following sense:
            \[
            \begin{split}
                H(x, Du(x))  = g(m(x)), \quad &\text{ a.e. in } \{ x \in \Omega \colon m(x) > 0 \},\enskip \text{ and} \\
                H(x, Du(x))  \leq g(0), \quad &\text{ a.e. in } \{ x \in \Omega \colon m(x) = 0 \}.
            \end{split}
		  \]
        \item{ 
        The Dirichlet boundary condition in \eqref{eq:BC} is satisfied in the trace sense, namely:
\begin{enumerate}
\item the boundary trace of \(u\) is well defined and satisfies
\[
u \le \psi \quad \text{a.e. on } \Gamma_D;
\]
\item the normal trace of the flux, \(m D_pH(x,Du)\), is well defined and satisfies
\[
m D_p H \cdot \nu \le 0, 
\]
in the sense of distributions on \(\Gamma_D\),  i.e., when tested against nonnegative test functions supported on \(\Gamma_D\);
\item the complementarity condition
\[
(\psi- u)(m D_pH(x,Du)\cdot \nu) = 0
\]
 holds in the sense of distributions in $\Gamma_D$.
\end{enumerate}
}
\end{enumerate}

\end{definition}

\section{Examples} \label{sec:examples}

This section presents explicit closed-form examples of MFGs that motivate our model and illustrate characteristic behaviors of solutions to \eqref{eq:1d_MFG}--\eqref{eq:1d_BC}. In particular, we demonstrate that imposing a strict Dirichlet boundary condition in first-order problems may lead to artificial inflows at the boundary, while phenomena such as the formation of empty regions (where \(m \equiv 0\)) and potential non-uniqueness of the value function \(u\) can arise, issues of particular importance in scenarios with congestion or overcrowding. Moreover, these examples serve two key purposes: they provide benchmarks for verifying numerical methods, and they reveal intrinsic connections among the model’s components, thereby facilitating the development of new theoretical results (see, e.g., \cite{MR2928378}).

\subsection{Limitations of Strict Dirichlet Conditions in Exit Problems} \label{subsec:example1111}

In exit problems in optimal control, the system’s behavior within the domain is modeled by a Hamilton–Jacobi equation, while the exit cost is prescribed via a Dirichlet boundary condition. In the context of viscosity solutions,
the boundary value may not be achieved, and the solution becomes discontinuous at the boundary, see \cite{Barlesbook}. A simple example is the Hamilton-Jacobi equation
\[
			\frac12 u_x^2 = 0 \quad  \text{in } (0,1),
\]
subjected to the Dirichlet boundary conditions $u(0)=0$ and $u(1)=1$. The unique viscosity solution for this problem is $u=0$ on $(0,1)$, which is
discontinuous at $x=1$.
{ 
However, in the context of MFGs, we typically seek solutions in Sobolev spaces
(continuous up to the boundary). Strictly imposing the Dirichlet condition forces the solution
to attain the boundary value. If the imposed cost is high relative to the interior dynamics,
the system may induce artificial inflows to satisfy this constraint, leading to unintended
behavior at boundaries intended as exits.
}

To illustrate this behavior, consider the MFG
	\begin{equation}\label{eq:first_order_model}
		\begin{cases}
			\frac12 u_x^2 = m &\text{in } (0,1),  \\[1mm]
			- (m\,u_x)_x = 1      & \text{in } (0,1),
		\end{cases}
	\end{equation}
subject to the Dirichlet boundary conditions
	\begin{equation*}
		u(0)=0 \quad \text{and} \quad u(1)=a.
	\end{equation*}
{ 
We introduce a source term (the “1” on the right-hand side) in this illustrative example.
This contrasts with Problem~\ref{problem:1}, which is source-free. Here, the source term ensures a
non-trivial population density without requiring boundary inflow, allowing us to isolate and
emphasize the effect of the Dirichlet boundary condition on the flux direction.
}

The system is explicitly solvable. Integrating the second equation in \eqref{eq:first_order_model} yields that the flux of agents is
	\begin{equation}\label{eq:Jin1stOE}
		j(x) := -m\,u_x = x + c,
	\end{equation}
for some constant \(c \in \mathbb{R}\). Substituting this expression into the Hamilton–Jacobi equation, \(\frac12 u_x^2 = m\), leads to
	\[
		m = \frac{1}{2^{1/3}} (x+c)^{2/3}.
	\]
Next, substituting back into \eqref{eq:Jin1stOE} and applying the boundary condition \(u(0)=0\) yields
	\[
		u(x) = \frac{3}{2^{5/3}}\Big[c^{4/3} - (x+c)^{4/3}\Big].
	\]
Finally, imposing the boundary condition at \(x=1\) determines \(c\) via
	\begin{equation} \label{eq:aAndcRelation}
		\frac{3}{2^{5/3}}\Big[c^{4/3} - (1+c)^{4/3}\Big] = a.
	\end{equation}

Applying the mean value theorem to the difference \((1+c)^{4/3} - c^{4/3}\) yields
	\begin{equation} \label{eq:aAndcRelationWithMVT}
		a = -2^{1/3}\Big(c+\theta_{c}\Big)^{1/3},
	\end{equation}
with some \(\theta_c \in (0,1)\). Consequently, a large positive \(a\) forces \(c\) to be significantly negative, while a large negative \(a\) results in a correspondingly large positive \(c\).

Now, we examine how variations in \(a\) influence the flow direction at the boundaries. The flux at the boundaries is given by
	\[
		j(0)=c \quad \text{and} \quad j(1)=1+c.
	\]
Recall that the interpretation of these fluxes depends on the outward normal: at \(x=0\), a negative value \(j(0)\) indicates outflow (exit), whereas at \(x=1\), a positive value \(j(1)\) indicates outflow. Thus, if \(a\) is sufficiently large and positive (so that \(c \ll -1\)), then \(j(0)\) is negative—implying an exit at \(x=0\), while \(j(1)\) corresponds to an inflow (entry). Conversely, if \(a\) is very negative, then \(c\) becomes positive, so that \(j(0)\) indicates an entry at \(x=0\) and \(j(1)\) an exit at \(x=1\). A third scenario arises when \(-1 < c < 0\); in this case, the fluxes are such that both boundaries effectively serve as exit points.
The reason for this behavior is the following.
Since the running cost scales with the density of players, imposing an excessively high Dirichlet boundary condition may force the system to achieve an unrealistically high density near the boundary. Consequently, the system may induce an artificial inflow of players to satisfy the boundary requirement, as seen in our example, where a large positive \(a\) resulted in \(x=1\) functioning as an entry point.

In summary, these observations confirm that the Dirichlet boundary conditions do not predetermine the entry-exit regions of the domain boundary; one must solve the problem to determine the actual behavior. This motivates the adoption of a relaxed Dirichlet condition to ensure that exit and entry regions are correctly and a priori identified.

\subsection{One-Dimensional Mean Field Games with Mixed Boundary Conditions}

In contrast to the strict Dirichlet boundary condition from Subsection~\ref{subsec:example1111}, we now consider a simple one-dimensional MFG with a relaxed Dirichlet condition at one end of the interval and a prescribed Neumann inflow at the other.
This example admits an explicit solution that offers insights into the
interplay between inflow constraints, exit costs, and the resulting player density of MFG systems of the form \eqref{eq:MFG}--\eqref{eq:BC}.

Let \( \Omega = (0,1) \), and consider the MFG system
\begin{equation} \label{eq:1d_MFG}
\begin{cases}
\frac{1}{2} u_x^2 + V(x) = m, \\
- (m\,u_x)_x = 0,
\end{cases}
\end{equation}
subject to the boundary conditions
\begin{equation} \label{eq:1d_BC}
\begin{cases}
    -m(0)\,u_x(0) = j_0, \\[.25em]
	u(1) \leq 0, \\[.25em]
    u(1) \, m(1) \, u_x(1) = 0,
\end{cases}
\end{equation}
where the nonnegative constant \(j_0 \) corresponds to the prescribed inflow of players at \(x=0\). The key distinction of this setup is that we impose a relaxed Dirichlet condition at \( x = 1 \), which specifies the players' exit cost but allows for selective departure. This strategic combination of boundary conditions is crucial for the precise modeling of the flow of players within the domain and for ascertaining the influence of exit costs on the players' flow and distribution. Moreover, the term \( V \) is called the \emph{potential}, and it represents the spatial preferences of the players in MFG models (i.e., distribution of attraction sites in the domain of the game).

We divide our analysis of this example into two cases: \(j_0 = 0\) (no inflow) and \(j_0 > 0\) (positive inflow).

\paragraph{Case 1 (\( j_0 = 0 \)):}
In the absence of inflow, the transport equation in \eqref{eq:1d_MFG} and the left boundary condition in \eqref{eq:1d_BC} imply that
\[
m(x) u_x(x) = m(0) u_x(0) = 0 \quad \text{for all } x \in (0,1);
\]
hence, either \( u_x(x) = 0 \) or \( m(x) = 0 \). Taking Definition~\ref{def:WeakSolution} into consideration, we substitute these alternatives into the HJ equation in \eqref{eq:1d_MFG}. Thus, we obtain that either
\[
\begin{array}{ll}
    m(x) >0 \enskip \text{ and } \enskip u_x(x)=0 & \implies   m(x) = V(x),
\end{array}
\]
or
\[
\begin{array}{ll}
    m(x)=0  &\implies  \frac12 u_x(x)^2 \leq -V(x).
\end{array}
\]
The first alternative is only possible if \( m(x) = V(x) = V_+(x) \), while the second alternative can only hold if $ V \leq 0$, implying also that \( m(x)=V_+(x)=0 \).
A particular solution can be obtained by considering the equality case in the Hamilton-Jacobi equation, leading to the solution
\[
\begin{cases}
m(x) = V_+(x), \\[.5em]
u(x) = u(1) \pm \displaystyle \sqrt{2} \int_x^1 \sqrt{V_-(z)}\dz.
\end{cases}
\]
Thus, if $V$ changes sign, the MFG \eqref{eq:1d_MFG}--\eqref{eq:1d_BC} admits at least two distinct solutions for \(u\).

This example illustrates that the density \(m\) may vanish in certain regions. Owing to the invariance of \( u \) under the addition of a constant, we may assume without loss of generality that \(u(1)=0\). In the zero-current case, either the velocity $u_x$ is zero (so that \(m\geq 0\)) or the velocity is nonzero, in which case \(m\) must vanish.

\paragraph{Case 2 (\( j_0 > 0 \)):}
{ 
The transport equation \(- (m u_x)_x = 0\) implies that the flux is constant:
\(m(x)u_x(x)=C\). The boundary condition at \(x=0\) is \(-m(0)u_x(0)=j_0\).
Thus \(C=-j_0\), and we must have
\[
m(x)u_x(x)=-j_0.
\]
}

Because \(j_0 \neq 0\), it follows that \(m(x) > 0\) for all \(x\); hence,
\begin{equation}\label{eq:u_j_m}
	u_x(x) = - \frac{j_0}{m(x)}.
\end{equation}
Substituting this into the first equation of \eqref{eq:1d_MFG} and multiplying by \(m(x)^2\) yields
\begin{equation}\label{eq:m3}
	m(x)^3 - V(x)\,m(x)^2 - \frac{j_0^2}{2} = 0.
\end{equation}
This cubic equation admits three real roots, of which exactly one is positive. Thus, we obtain
\[ \begin{split}
		m(x) =
		\begin{cases}
			\sqrt[3]{ a(x) + b(x)} + \sqrt[3]{ a(x) - b(x)} + \dfrac{V(x)}{3},                                                           & V > \kappa,    \\
			\dfrac{j_0^{2/3}}{2},                                                                                                        & V(x) = \kappa, \\
			 v ^{k_0} \sqrt[3]{ a(x) + b(x)} + \dfrac{1}{ v ^{k_0} \sqrt[3]{ a(x) + b(x)}} \left( \dfrac{V}{3} \right)^2 + \dfrac{V}{3}, & V(x) < \kappa,
		\end{cases}
	\end{split}\]
where \( v \) is a complex cube root of \(1\), \( k_0 = 0, 1, 2 \), and
\[ \begin{split}
		\kappa =  - \dfrac{3}{2}  j_0^{2/3}, \quad
		a(x) = \left(\frac{V}{3}\right)^3 + \left(\frac{j_0}{2}\right)^2, \quad
		b(x) = \frac{j_0}{2} \sqrt{ 2 \left(\frac{V}{3}\right)^3  + \left(\frac{j_0}{2}\right)^2}.
	\end{split}\]
Although we have a complex expression in the case of $V(x) < \kappa$, all three roots of \(m\), for \( k_0 = 0, 1, 2 \), are real due to the coefficients of the cubic equation.
Moreover, two of them are negative, and only one is positive, which we choose.

From \eqref{eq:u_j_m}, it follows that \(u_x(x) < 0\). Consequently, the first equation in \eqref{eq:1d_MFG} implies
\[
u_x(x) = -\sqrt{2\bigl(m(x)-V(x)\bigr)},
\]
where the negative square root is chosen for consistency. Imposing the compatibility condition \(u(1)=0\) from \eqref{eq:1d_BC} yields
\[
u(x) = \int_x^1 \sqrt{2\bigl(m(x)-V(x)\bigr)}\dx.
\]

This example illustrates that, in one dimension, a positive current guarantees that the density \(m\) remains strictly positive—regardless of the sign and amplitude of the potential \(V\). This behavior contrasts with the two-dimensional case, where \(m\) may vanish in certain regions,
as we discuss in Subsection~\ref{subse:example-2d}.

\subsubsection{Confirmation of the variational formulation} It is worth noting that, in this example, the variational formulation that corresponds to the MFG \eqref{eq:1d_MFG}--\eqref{eq:1d_BC} is given by
\begin{equation}\label{eq:1d_var}
	\inf_{u} \int_0^1 \frac{1}{2}\left(\Bigl(\frac{1}{2}u_x^2 + V(x)\Bigr)_+\right)^2 \dx - j_0 u(1),
\end{equation}
subject to the constraint \(u(1) \leq 0\). Here, the coupling term \( g(m) = m \) and
\[
G(z) = \frac{1}{2}(z_+)^2.
\]
This variational problem can be solved numerically via a simple finite difference method in conjunction with Mathematica's built-in function \texttt{FindMinimum}. As a result, one can obtain an experimental confirmation of the correspondence result in Theorem~\ref{thm:correspondance}.

\paragraph{The case \(j_0 = 0\):}
Figure~\ref{figure:j0=0} compares the numerical results with the analytical solution.

\begin{figure}[H]
	\centering
	\includegraphics[width= 0.98\textwidth]{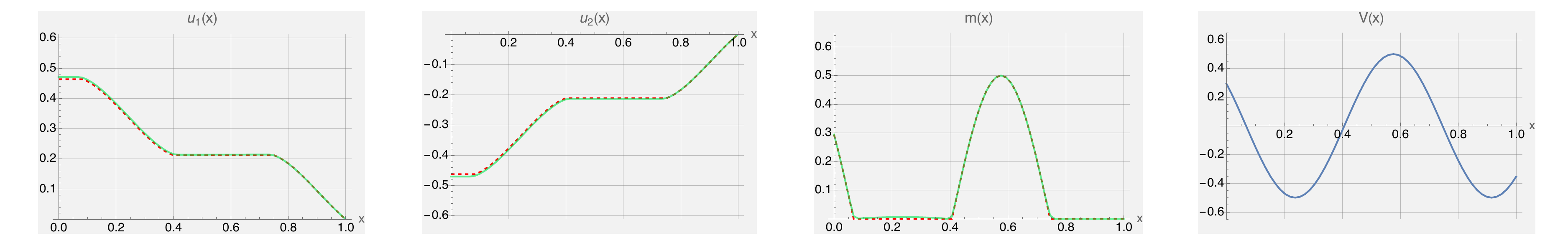}
    \caption{Comparison of the analytical (red dashed) and numerical (green) solutions for \(m\) and two distinct \(u_1\) and \(u_2\), with \(j_0 = 0\), and \(V(x) = 0.5 \sin\bigl(3\pi (x+0.25)\bigr)\).}

	\label{figure:j0=0}
\end{figure}

\paragraph{The case \(j_0 > 0\):}

Figure~\ref{figure:j0g0} compares the numerical results and the analytical solution.
\begin{figure}[H]
	\centering
	\includegraphics[width= 0.95\textwidth]{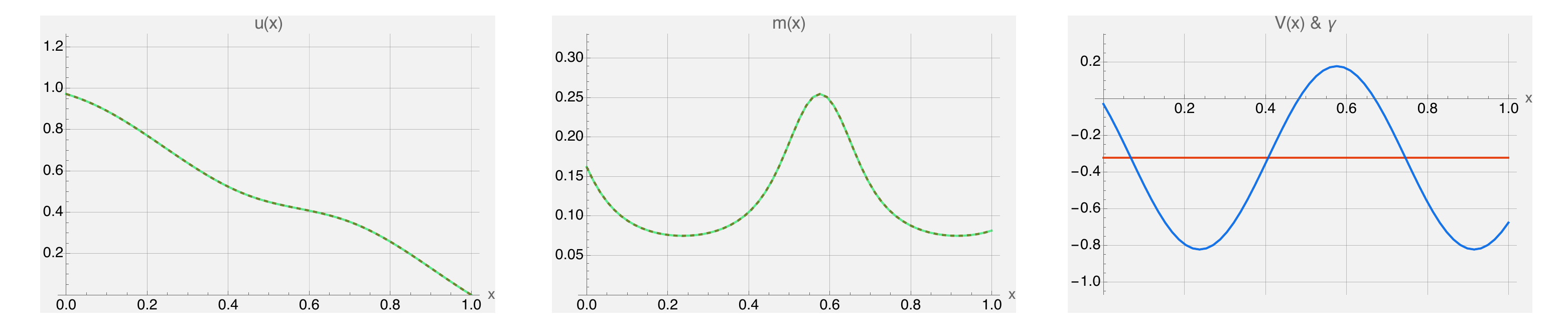}
    \caption{Comparison of the analytical (red dashed) and numerical (green) solutions for  \(u\) and  \(m\), with \(j_0=0.1\)  and  \(V(x)=\gamma + 0.5 \sin\bigl(3\pi (x+0.25)\bigr)\).}

			\label{figure:j0g0}
\end{figure}

\subsection{A two-dimensional example}\label{subse:example-2d}

Let \(\Omega = (0,1) \times (0,1)\) be the unit square whose boundary is partitioned as \( \Gamma = \Gamma_N \cup \Gamma_D\), where
\[
\begin{split}
\Gamma_N &= \{0\} \times [0,1] \cup [0,1] \times \{1\}, \quad \text{and} \\
\Gamma_D &= \{1\} \times [0,1] \cup [0,1] \times \{0\}.
\end{split}
\]
Set
\[
V(x,y) := 3e^{-\pi x}\cos(\pi y) - \frac{1}{2}\pi^2e^{-2\pi x},
\]
and consider the following MFG problem on \(\Omega\):

\medskip
\noindent\textbf{The MFG System:}
\[
\begin{cases}
\frac{1}{2}|D u(x,y)|^2 + V(x,y) = m, & \text{for } (x,y) \in \Omega, \\[1mm]
-\Div(m D u) = 0, & \text{for } (x,y) \in \Omega.
\end{cases}
\]

\medskip
\noindent\textbf{Neumann Boundary Conditions:}
\[
\begin{cases}
-m\,\partial_x u(0,y) = \dfrac{3\pi}{2}\,[\sin(2\pi y)]_+, & \text{for } y \in [0,1], \\[.35em]
m\,\partial_y u(x,1) = 0, & \text{for } x \in [0,1].
\end{cases}
\]

\medskip
\noindent\textbf{Relaxed Dirichlet Boundary Conditions:}
\[
\begin{cases}
u(1,y) \le e^{-\pi}\sin(\pi y), & \text{for } y \in [0,1], \\[.35em]
u(x,0) \le 0, & \text{for } x \in [0,1].
\end{cases}
\]

\medskip
\noindent\textbf{No-Entry Conditions:}
\[
\begin{cases}
m\,\partial_x u(1,y) \le 0, & \text{if } u(1,y)= e^{-\pi}\sin(\pi y) \text{ for } y \in [0,1], \\[.35em]
-m\,\partial_y u(x,0) \le 0, & \text{if } u(x,0)= 0 \text{ for } x \in [0,1].
\end{cases}
\]

\medskip
\noindent\textbf{Contact-Set Conditions:}
\[
\begin{cases}
m\bigl(u(1,y) - e^{-\pi}\sin(\pi y)\bigr)\,\partial_x u(1,y) = 0, & \text{for } y \in [0,1], \\[.35em]
-m\,u(x,0)\,\partial_y u(x,0) = 0, & \text{for } x \in [0,1].
\end{cases}
\]

A direct computation shows that the pair
\[
\begin{cases}
u(x,y) = e^{-\pi x}\sin(\pi y), \\[1mm]
m(x,y) = 3e^{-\pi x}\,[\cos(\pi y)]_+,
\end{cases}
\]
solves the above MFG in the sense of Definition~\ref{def:WeakSolution}.

\begin{figure}[H]
    \centering
    \begin{subfigure}{0.48\textwidth}
        \centering
        \includegraphics[width=\textwidth]{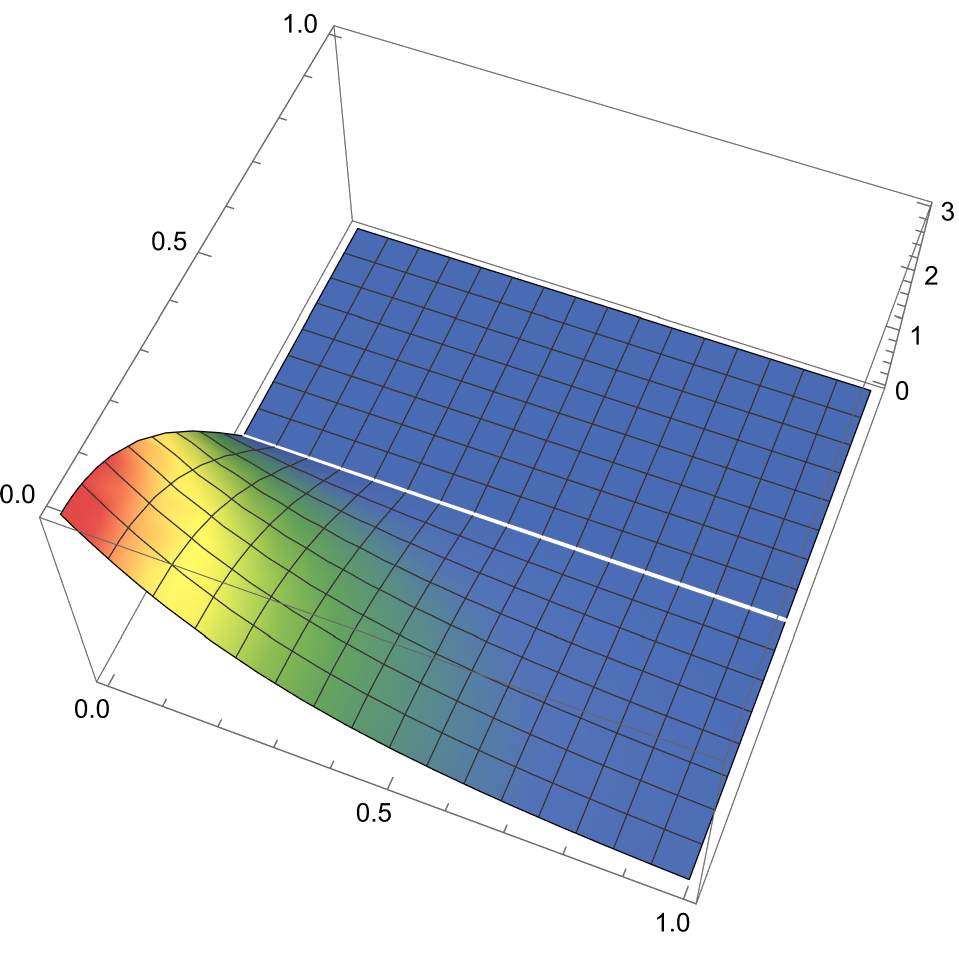}
	\caption{Graph of \(m\).} \label{figure:Eg2GraphOfm}
    \end{subfigure}\quad
    \begin{subfigure}{0.4\textwidth}
        \centering
        \includegraphics[width=\textwidth]{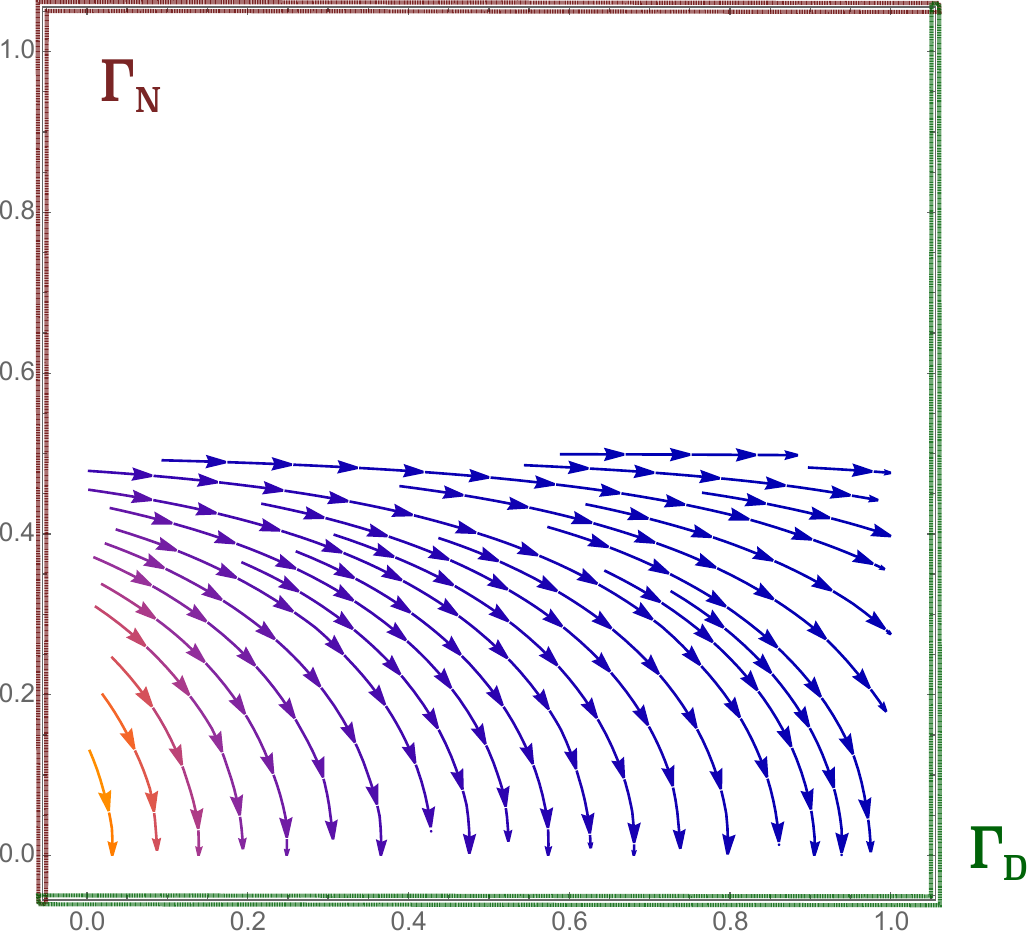}
	\caption{\centering Graph of the flow, \(J\), where the upper half shows the absence of flow in empty regions.} \label{figure:Eg2GraphOfJ}
    \end{subfigure}
    \caption{
    Illustration of MFG Behavior: Density
$m$ and Flux
$J$ Highlighting Empty Regions and Flow Patterns.}
     \label{figure:fullEg2GraphOfJ}
\end{figure}

This example illustrates several intrinsic features of the MFG. We highlight two phenomena relevant to our discussion:

\begin{enumerate}
    \item In the above example, the function \(\psi\) on the Dirichlet boundary is given by
    \[
    \psi(x,y) = \begin{cases}
    e^{-\pi}\sin(\pi y) & \text{for } x = 1,\; y \in [0,1], \\[.35em]
    0 & \text{for } x \in [0,1],\; y = 0;
    \end{cases}
    \]
    Accordingly,
    $u=\psi$ on \(\Gamma_D\). However, the constraints imply
    \[
    \begin{cases}
    m\,\partial_x u(1,y) = -\dfrac{3\pi e^{-2\pi}}{2}\,[\sin(2\pi y)]_+ \le 0, & \text{for } y \in [0,1], \\[.35em]
    -m\,\partial_y u(x,0) = -3\pi e^{-2\pi x}\,[\cos(\pi y)]_+ \le 0, & \text{for } x \in [0,1].
    \end{cases}
    \]
    In particular, since \(m\,\partial_x u(1,y)=0\) for \(y \in [\tfrac{1}{2},1]\), the condition \(u=\psi\) on \(\Gamma_D\) does not necessarily force players to exit the domain in that region.

    \item Although in this example \(u=\psi\) on \(\Gamma_D\), one may modify the boundary datum by setting
    \[
    \widetilde{\psi}(x,y) = \begin{cases}
    e^{-\pi}\sin(\pi y) + \bigl(y-\tfrac{1}{2}\bigr)_+^2 & \text{for } x = 1,\; y \in [0,1], \\[.35em]
    0 & \text{for } x \in [0,1],\; y = 0,
    \end{cases}
    \]
    thus, yielding a modified problem that still admits the same solution. This modification illustrates that a solution \(u\) may differ from \(\psi\) on \(\Gamma_D\), emphasizing the necessity for the relaxation of the Dirichlet boundary condition.
\end{enumerate}

\subsection{More two-dimensional examples with a complex variable construction}
\label{complexmethod}

We introduce a novel method for generating explicit solutions to MFGs using properties of complex-valued functions in two dimensions. This approach enables the construction of solutions that exhibit regions devoid of agents, which is the first time such a method has been proposed in the MFG literature.
The method takes advantage of the fact that any harmonic function in a simply connected domain admits a harmonic conjugate, with the potential \(V\) chosen a posteriori to satisfy the MFG system.

More precisely, we consider MFGs of the form
\begin{equation} \begin{split}
		\label{eq:CompEg}
		\begin{cases}
			\frac12 |D u|^2 +V
			=  m^{1/q}                                              & \text{ for } (x,y) \in \Omega, \\
			-\Div(m D u) = -D m \cdot D u - m\Delta u = 0 & \text{ for } (x,y) \in \Omega.
		\end{cases}
	\end{split}\end{equation}

Consider a differentiable, complex-valued mapping
\[
z = x+iy \mapsto f(z)= u(x,y) + i\,m(x,y),
\]
where \(u\) and \(m\) are real-valued functions. By the Cauchy-Riemann equations, we have
\[ \begin{split}
		\begin{cases}
			\partial_xu  & = \partial_y m     \\
			\partial_y u & = - \partial_x m.
		\end{cases}
	\end{split}\]
Thus,
\[ \begin{split}
		D u \cdot D m = \partial_xu \, \partial_x m + \partial_y u\, \partial_y m = 0.
	\end{split}\]
Because \(f\) is differentiable, \(u\) is harmonic; that is, \(u\) solves \(-\Delta u = 0\).
Then, \(m\) and \(u\) solve the continuity equation;
\begin{equation} \begin{split}
		-\Div(mDu) = -Dm\cdot Du - m \Delta u = 0.
	\end{split}\end{equation}
{ 
Generally, the function \(m\) obtained from this construction may not be nonnegative everywhere.
To construct a valid MFG solution, we define the density as the positive part,
\(\widetilde m = (m)_+\). We then define the potential \(V(x,y)\) retrospectively to satisfy the
Hamilton–Jacobi equation for the pair \((u,\widetilde m)\):
\[
V(x,y):=\widetilde m(x,y)^{1/q}-\tfrac12|D u(x,y)|^2.
\]
}
Then, we have
\[ \begin{split}
		\begin{cases}
			\frac12 |D u(x,y)|^2 +V(x,y)
			=  \widetilde m^{1/q}        & \text{ for } (x,y) \in \Omega, \\
			-\Div(\widetilde m D u) =  0 & \text{ for } (x,y) \in \Omega.
		\end{cases}
	\end{split}\]
The first equation holds in a weak-strong sense (by Definition~\ref{def:WeakSolution}), while the second equation holds in the sense of distributions.

{ 
This construction technique is particularly valuable as it provides a systematic way to generate
explicit solutions where the density vanishes (\(\widetilde m = 0\)).
These examples serve as benchmarks for understanding the structure of solutions near
empty regions. The example in the preceding section is not a particular case of this construction and shows that areas of vanishing density can arise in other cases. 
}

\section{The variational problem}\label{sec:VarProblem}

In this section, we analyze the variational formulation in Problem~\ref{problem:2} in more depth and prove the existence of minimizers. We additionally derive a few regularity estimates that arise directly from this formulation. We use the direct method in the calculus of variations, with the core difference lying in how we handle the nonstandard boundary conditions.

\subsection{On the existence of a minimizer}

In this subsection, we establish the existence of minimizers for Problem~\ref{problem:2}. As customary in the study of variational problems, we begin by establishing the following lower bound.

\begin{prop} \label{lem:lower-bound-for-I2}
	Under Assumptions~\ref{assume:data1}-\ref{assume:H} and Assumption~\ref{assume:G}, let  \( u \in \cU \) be an admissible function in Problem~\ref{problem:2}. Then, there is a constant \( C > 1 \), independent of \( u \), such that
    \[
        \mathcal{I}[u] \geq C^{-1}\|Du\|_{L^{\gamma}(\Omega)}^\gamma - C.
	\]
\end{prop}

\begin{proof} Let  \( u \in \cU \) be arbitrary. Then, from Assumption~\ref{assume:G} and Assumption~\ref{assume:H}, we obtain
	\begin{equation} \label{eq:PropOfLowerBound1}
        \begin{split}
            G( H(x, Du) ) &\geq C^{-1}\, H(x,Du)_+^{(\b+1)/\b} -C\\
            &\geq C^{-1}\, (|Du|^{\a} -C)_+^{(\b+1)/\b} -C\\
            &\geq C^{-1} |Du|^\gamma -  C,
        \end{split}
    \end{equation}
    where used that \( \gamma = \a (\b+1)/\b\).	This implies that,
    \[
        \begin{split}
    		\mathcal{I}[u] &=  \int_\Omega G( H(x, Du) )\dx - \int_{\Gamma_N} j  u \d s \\
            &\geq  C^{-1}\|Du\|_{L^{\gamma}(\Omega)}^\gamma - C- \int_{\Gamma_N} j  u \d s .
        \end{split}
	\]
    Using Lemma~\ref{lem:lower-bound-for-int-ju}, this gives us
    \[
        \begin{split}
    		\mathcal{I}[u] &\geq C^{-1}\|Du\|_{L^{\gamma}(\Omega)}^\gamma - C\,\|Du\|_{L^{\gamma}(\Omega)}-C.
        \end{split}
	\]
    Since \( \gamma>1 \), this gives us that
    \[
        \mathcal{I}[u] \geq C^{-1}\|Du\|_{L^{\gamma}(\Omega)}^\gamma - C
	\]
    for some \( C >0 \), independent of \( u \).
\end{proof}

\begin{remark}\label{rem:upperBound4cI} The previous proposition establishes that \(\inf_{u\in\mathcal{U}}  \, \mathcal{I}[u]\) is bounded below. It is also bounded above by
	\[
		\inf_{u\in\mathcal{U}} \,\mathcal{I}[u] \leq \mathcal{I}[\psi] = C.
	\]
\end{remark}
Next, we apply the direct method in the calculus of variations and prove that any minimizing sequence converges to an admissible minimizer, hence establishing the existence of a minimizer for the variational problem.

\begin{proof}[\textbf{Proof of Theorem~\ref{thm:var_prob_exist}}]
	Let \(\{ u_n \} \in \mathcal{U}\) be a minimizing sequence. By Proposition~\ref{lem:lower-bound-for-I2}, we have
	\[
        C^{-1}\|Du_n\|_{L^{\gamma}(\Omega)}^\gamma-C  \leq \mathcal{I}[u_n] \leq C.
    \]
    Consequently,
	\begin{equation} \label{eq:ProofofExistTheorem1}
        \|Du_n\|_{L^{\gamma}(\Omega)} \leq C
    \end{equation}
    for some \( C > 1 \), independent of \( n \).
    Moreover, due to Lemma~\ref{lem:lower-bound-for-int-ju} and Remark~\ref{rem:upperBound4cI}, we have that
	\begin{equation} \label{eq:ProofofExistTheorem2}
        C^{-1} \|Du_n\|_{L^{\gamma}(\Omega)}-C \leq \int_{\Gamma_N}ju_n\d s \leq C \|Du_n\|_{L^{\gamma}(\Omega)}+C.
    \end{equation}
    And the extended Poincar\'e inequality implies that
	\begin{equation} \label{eq:ProofofExistTheorem3}
        \|u_n\|_{L^{\gamma}(\Omega)} \leq C \|Du_n\|_{L^{\gamma}(\Omega)}+C \, \Big|\int_{\Gamma_N}ju_n\d s \Big|.
    \end{equation}
    Combining \eqref{eq:ProofofExistTheorem1}--\eqref{eq:ProofofExistTheorem3}, we obtain that
	\begin{equation} \label{eq:ProofofExistTheorem4}
        \|u_n\|_{L^{\gamma}(\Omega)} \leq C
    \end{equation}
    for some \( C > 1 \), independent of \( n \).

    The estimates \eqref{eq:ProofofExistTheorem1} and \eqref{eq:ProofofExistTheorem4} give us that \( \| u_n\|_{W^{1,\gamma}(\Omega)} \leq C \). Because \(W^{1,\gamma}(\Omega)\) is a reflexive Banach space, the bounded sequence \( \{u_n\} \subset W^{1,\gamma}(\Omega) \)  is weakly precompact; that is, there exists a subsequence that converges weakly to a function \(u \in W^{1,\gamma}(\Omega)\). We now show that
	\[
        u \in \mathcal{U} \quad \text{ and } \quad \liminf_{n\to\infty} \mathcal{I}[u_n] \geq \cI[u].
    \]
	To this end, note that
	\begin{align*}
		\mathcal{I}[u_n] - \mathcal{I}[u] = &
		\int_\Omega \left[G\left(  H(x, Du_n) \right) - G\left(H(x, Du) \right) \right] \dx - \int_{\Gamma_N} j \,(u_n-u) \d s \\
		&\geq \int_\Omega G'( H(x, Du) )D_pH(x,Du)(Du_n-Du)\dx - \int_{\Gamma_N} j (u_n - u) \d s.
	\end{align*}
	Because \(j \in L^{\gamma'}(\Gamma_N)\) and \(G'( H(\cdot, Du) )D_pH(\cdot,Du) \in L^{\gamma'}(\Omega)\), we have
	\[ \begin{split}
			\int_{\Gamma_N} j (u_n - u) ds \to 0,
		\end{split}\]
	and
	\[ \begin{split}
			\int_\Omega G'( H(x, Du) )D_pH(x,Du)(Du_n-Du)\dx \to 0.
		\end{split}\]

	It remains to show that \(u \in \mathcal{U}\).
	Since \(u_n \rightharpoonup u\) in \(W^{1,\gamma}(\Omega)\), we have
	\[ \begin{split}
			\lim_{n \to \infty} \int_{\Gamma_D} f (u_n - \psi) ds = \int_{\Gamma_D} f (u - \psi) ds,
		\end{split}\]
	for every \(f \in L^{\gamma'}(\Gamma_D)\).
	Taking \(f > 0\), we conclude
	\[ \begin{split}
			\int_{\Gamma_D} f (u - \psi) ds \leq 0 .
		\end{split}\]
	Therefore, \(u \leq \psi\) on \(\Gamma_D\). \end{proof}

\subsection{First-order estimates}

In this subsection, we establish several estimates that provide better regularity for solutions of the MFG. These estimates guarantee the applicability of some of our results (e.g., the Neumann trace theorem in the appendix).

\begin{lemma}
Under Assumptions~\ref{assume:data1}-\ref{assume:H} and Assumption~\ref{assume:G}, let \(u \in \cU\) be a minimizer of Problem~\ref{problem:2} and set \(m = G'(H(x,Du))\).
	Then, there is a constant \(C >0\), independent of \( m \) and \( u \), such that
	\[ \begin{split}
		m \leq C\abs{Du}^{\frac{\a}{\b}} + C.
	\end{split}\]
\end{lemma}

\begin{proof}
	Since the pair \((m,u)\) is a solution to Problem~\ref{problem:1}, we have that
	\[ \begin{split}
		m = G'(g(m)) = G'(H(x,Du)).
	\end{split}\]
	Subsequently, we have
	\[ \begin{split}
		m = G'(H(x,Du)) &\leq C|H(x,Du)|^{\frac{1}{\b}} + C\\
		&\leq C|Du|^{\frac{\a}{\b}} +C,
	\end{split}\]
	using the assumptions on \(H\) and \(G\).
\end{proof}
From this estimate, we obtain the following result.

\begin{corollary}\label{cor:m_g_m}
Under Assumptions~\ref{assume:data1}-\ref{assume:H}, and~\ref{assume:G} let \(u\) solve Problem~\ref{problem:2} and \(m = G'(H(x,Du))\).
	Then, \(m \in L^{\b+1}(\Omega)\), \(g(m) \in L^{\frac{\b+1}{\b}}(\Omega)\), and \(mD_pH(x,Du) \in \left(L^{\gamma'}(\Omega)\right)^d\).
\end{corollary}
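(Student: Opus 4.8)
The plan is to deduce all three integrability statements from a single input: the pointwise bound $m \leq C\abs{Du}^{\beta(\alpha-1)} + C$ established in the previous lemma, together with the fact that $Du \in L^\gamma(\Omega)$. The latter holds because $u$ solves Problem \ref{problem:2}, so $u \in \mathcal{D} \subset W^{1,\gamma}(\Omega)$ by Theorem \ref{thm:var_prob_exist}, whence $Du \in L^\gamma(\Omega)$. Throughout I use the relation $\gamma = \alpha\beta$ linking the three growth exponents, the elementary inequality $a^s \leq a^t + 1$ for $0 \leq s \leq t$ and $a \geq 0$ to absorb lower-order powers, and $(a+b)^p \leq C(a^p+b^p)$ for $p \geq 1$. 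Once the exponent arithmetic is set up, each claim reduces to the integrability of $\abs{Du}^\gamma$.

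For the first claim, I raise the pointwise bound to the power $\alpha'$ to get $m^{\alpha'} \leq C\left(\abs{Du}^{\beta(\alpha-1)\alpha'} + 1\right)$. The key computation is $\beta(\alpha-1)\alpha' = \beta(\alpha-1)\cdot\tfrac{\alpha}{\alpha-1} = \alpha\beta = \gamma$, so $m^{\alpha'} \leq C(\abs{Du}^\gamma + 1) \in L^1(\Omega)$, giving $m \in L^{\alpha'}(\Omega)$. For the second claim I invoke the upper growth bound on $g$ from Assumption \ref{ass:g}, namely $g(m) \leq C(m^{1/(\alpha-1)} + 1)$, so that $g(m)^\alpha \leq C(m^{\alpha/(\alpha-1)} + 1) = C(m^{\alpha'} + 1)$; since $m \in L^{\alpha'}(\Omega)$ was just shown, $g(m) \in L^\alpha(\Omega)$.

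The third claim is the one that requires genuinely combining two growth estimates. Using $\abs{D_pH(x,Du)} \leq C(\abs{Du}^{\beta-1} + 1)$ from Assumption \ref{ass:H} together with the pointwise bound on $m$, I would estimate $\abs{m\, D_pH(x,Du)} \leq C(\abs{Du}^{\beta(\alpha-1)} + 1)(\abs{Du}^{\beta-1} + 1)$. The dominant term on the right has exponent $\beta(\alpha-1) + (\beta-1) = \alpha\beta - 1 = \gamma - 1$, and all remaining terms have strictly smaller exponents (since $\beta > 1$ forces $\beta(\alpha-1) = \gamma - \beta < \gamma - 1$ and $\beta - 1 < \gamma - 1$), so they are absorbed into the leading term plus a constant. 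Hence $\abs{m\, D_pH(x,Du)} \leq C(\abs{Du}^{\gamma-1} + 1)$. Raising to the power $\gamma'$ and using $(\gamma-1)\gamma' = (\gamma-1)\cdot\tfrac{\gamma}{\gamma-1} = \gamma$ gives $\abs{m\, D_pH(x,Du)}^{\gamma'} \leq C(\abs{Du}^\gamma + 1) \in L^1(\Omega)$, so $m\, D_pH(x,Du) \in \left(L^{\gamma'}(\Omega)\right)^d$.

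I expect no serious obstacle; the content lies entirely in the exponent bookkeeping, and the only place demanding a little care is the third claim, where the product of the two polynomial growth rates must be verified to land at exactly $\gamma - 1$ (so that its $\gamma'$-th power is integrable) rather than overshooting. The alignment of conjugate exponents in all three parts is precisely what the identity $\gamma = \alpha\beta$ guarantees, and confirming this alignment is the crux of the argument.
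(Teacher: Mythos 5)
Your proof is correct. The first two claims ($m \in L^{\alpha'}$ and $g(m) \in L^{\alpha}$) follow exactly the paper's argument: raise the pointwise bound $m \leq C\abs{Du}^{\beta(\alpha-1)}+C$ to the power $\alpha'$, use $\beta(\alpha-1)\alpha' = \alpha\beta = \gamma$, and then feed $m \in L^{\alpha'}$ into the growth bound on $g$ from Assumption \ref{ass:g}. Where you genuinely diverge is the third claim. The paper estimates $\int_\Omega \abs{m D_pH(x,Du)}^{\gamma'}$ via H\"older's inequality with the exponent $\delta = \frac{\gamma-1}{\beta-1}$ (so that $\gamma'\delta(\beta-1)=\gamma$ and $\gamma'\delta'=\alpha'$), pairing the previously established integral bound $\norm{m}_{L^{\alpha'}}$ against $\norm{Du}_{L^\gamma}$. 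You instead stay entirely at the pointwise level: multiplying $m \leq C\abs{Du}^{\beta(\alpha-1)}+C$ by $\abs{D_pH(x,Du)} \leq C(\abs{Du}^{\beta-1}+1)$, checking that the dominant exponent is $\beta(\alpha-1)+(\beta-1) = \gamma-1$ and that the cross terms are absorbed, and then using $(\gamma-1)\gamma'=\gamma$. Both arguments are valid and both hinge on the same identity $\gamma = \alpha\beta$; yours is more elementary (no H\"older, no need to invoke the already-proven $L^{\alpha'}$ bound), while the paper's duality-pairing argument is more robust in that it would survive if one only knew $m \in L^{\alpha'}$ as integral information, without a pointwise domination of $m$ by a power of $\abs{Du}$.
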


\begin{proof}
	Using the inequality \((a+b)^{r}\leq 2^r(a^r+b^r)\), which holds for \(a,b,r \in \R_0^+\), we have
	\[ \begin{split}
		\int_\Omega m^{\b+1} \dx &\leq C \int_\Omega |Du|^{\frac{(\b+1)\a}{\b}}\dx +  C  < \infty
	\end{split}\]
	since \(\frac{(\b+1)\a}{\b} = \gamma\).

	By Assumption~\ref{assume:g}, for \(g(m)\), we have
	\[ \begin{split}
		\int_\Omega g^\frac{\b+1}{\b}  (m)\dx \leq C \int_\Omega m^{\b+1} \dx + C \leq C < \infty .
	\end{split}\]

	Now, set \(\delta = \frac{\gamma-1}{\a-1}\) and \(\delta' = \frac{\gamma-1}{\gamma- \a}\), and note that \(1/\delta + 1/\delta' = 1\). Then,
	\[ \begin{split}
		\gamma' \delta(\a-1) &= \frac{\gamma}{\gamma-1} \, \frac{\gamma-1}{\a-1}\, (\a-1) = \gamma
        \quad \text{and}
        \quad \gamma'\delta'= \frac{\gamma}{\gamma-1}\frac{\gamma-1}{\gamma- \a}= \b+1 .
	\end{split}\]
	Due to H\"{o}lder's inequality, we also have
	\[ \begin{split}
		\int_{\Omega}\abs{mD_pH(x,Du)}^{\gamma'} &\leq \left[\int_\Omega m^{\gamma'\delta'}\right]^{\frac{1}{\delta'}}\left[\int_\Omega \abs{D_pH(x,Du)}^{\gamma' \delta}\right]^{\frac{1}{\delta}}.
	\end{split}\]
	Using our assumptions on \(H\) and the inequality \((a+b)^{r}\leq 2^r(a^r+b^r)\), we further obtain
	\[ \begin{split}
		\int_{\Omega}\abs{mD_pH(x,Du)}^{\gamma'} &\leq  \left[\int_\Omega m^{\b+1}\right]^{\frac{1}{\delta'}}\left[\int_\Omega C(\abs{Du}^{\a-1}+1)^{\gamma' \delta}\right]^{\frac{1}{\delta}}\\
		&\leq  ||m||_{\b+1}^{\gamma'}\left[\int_\Omega C2^{\gamma' \delta}(\abs{Du}^{(\a-1)\gamma' \delta}+1)\right]^{\frac{1}{\delta}}\\
		&= \widetilde{C} ||m||_{\b+1}^{\gamma'} \left[||Du||_{\gamma}^{\gamma'(\a-1)}+1\right] < C,
	\end{split}\]
	since \(m\in L^{\b+1}\) and \(Du \in \left(L^{\gamma}(\Omega)\right)^d\).
\end{proof}

This leads to the following estimates.

\begin{proposition}
Let \(u\) solve Problem~\ref{problem:2} and \(m = G'(H(x,Du))\).
	Then,
	\[ \begin{split}
		\int_{\Omega} m g(m)\dx < C, \qquad \int_{\Omega} \dfrac{(m+1) |Du|^\a}{C} < C.
	\end{split}\]
\end{proposition}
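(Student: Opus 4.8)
The plan is to obtain both bounds as direct consequences of the integrability already recorded in Corollary \ref{cor:m_g_m}, combined with two applications of Hölder's inequality, rather than re-running the integration-by-parts argument behind the formal estimate in Proposition \ref{pro:apriori_est}. The point is that the genuinely analytic work — controlling $\|m\|_{L^{\alpha'}(\Omega)}$ and $\|Du\|_{L^\gamma(\Omega)}$ for the minimizer $u$ — has already been carried out, so what remains is exponent bookkeeping driven by the relation $\gamma=\alpha\beta$.

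First I would pin down the uniform control of the gradient, so that the final constant $C$ is traced back to the data. Since $u$ is a minimizer, minimality gives $\mathcal{I}[u]\le\mathcal{I}[\psi]\le C$; combining this with the coercivity lower bound established in the proof of Proposition \ref{lem:lower-bound-for-I2}, namely $\widetilde C\int_\Omega|Du|^\gamma - C\left(\int_\Omega|Du|^\gamma\right)^{1/\gamma}\le \mathcal{I}[u]+\widetilde C+C$, and using that $t\mapsto \widetilde C\,t - C\,t^{1/\gamma}$ is coercive, one gets $\int_\Omega|Du|^\gamma\,dx\le C$ with $C$ depending only on the data. Inserting this into the computation in the proof of Corollary \ref{cor:m_g_m} shows that $\|m\|_{L^{\alpha'}(\Omega)}$ and $\|g(m)\|_{L^\alpha(\Omega)}$ are likewise bounded by the data.

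For the first integral I would simply apply Hölder's inequality with the conjugate pair $(\alpha,\alpha')$: by Corollary \ref{cor:m_g_m} we have $m\in L^{\alpha'}(\Omega)$ and $g(m)\in L^\alpha(\Omega)$, hence
\[
\int_\Omega m\,g(m)\,dx \;\le\; \|m\|_{L^{\alpha'}(\Omega)}\,\|g(m)\|_{L^\alpha(\Omega)} \;\le\; C.
\]
For the second integral I would use $\gamma=\alpha\beta$. Since $\Omega$ is bounded, $m+1\in L^{\alpha'}(\Omega)$; moreover $(|Du|^\beta)^\alpha=|Du|^{\alpha\beta}=|Du|^\gamma$ is integrable, so $|Du|^\beta\in L^\alpha(\Omega)$ with $\||Du|^\beta\|_{L^\alpha(\Omega)}=\|Du\|_{L^\gamma(\Omega)}^{\beta}$. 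Hölder's inequality with $(\alpha,\alpha')$ then yields
\[
\int_\Omega (m+1)\,|Du|^\beta\,dx \;\le\; \|m+1\|_{L^{\alpha'}(\Omega)}\,\|Du\|_{L^\gamma(\Omega)}^{\beta} \;\le\; C,
\]
and dividing by the fixed positive constant from Assumption \ref{ass:H} preserves the bound. Here $|Du|^\beta$ is the exponent matching Proposition \ref{pro:apriori_est}; the same argument covers the displayed $|Du|^2$ whenever $\beta\ge 2$, since then $L^\gamma(\Omega)\hookrightarrow L^{2\alpha}(\Omega)$ on the bounded domain.

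I do not anticipate a genuine obstacle: the statement is the rigorous counterpart of the formal a priori estimate in Proposition \ref{pro:apriori_est}, and once Corollary \ref{cor:m_g_m} is available it reduces to two Hölder estimates. The only points requiring care are ensuring the constants are traced back to the data — which is precisely what the minimality-plus-coercivity step above guarantees — and confirming the exponent identity $\gamma=\alpha\beta$ used to place $|Du|^\beta$ in $L^\alpha(\Omega)$ against $m\in L^{\alpha'}(\Omega)$.
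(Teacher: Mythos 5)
Your proof is correct, but your route to the second estimate differs from the paper's. For the first bound both arguments coincide: H\"older with the pair \((\alpha,\alpha')\) applied to \(m\in L^{\alpha'}(\Omega)\) and \(g(m)\in L^{\alpha}(\Omega)\) from Corollary \ref{cor:m_g_m}. For the second bound, however, the paper exploits the coupling structure: it recalls that \(m=G'(H(x,Du))\) forces \(H(x,Du)=g(m)\), uses the Hamiltonian coercivity of Assumption \ref{ass:H} to replace \(|Du|^{2}/C\) by \(g(m)+C\), and then reduces \(\int_\Omega (m+1)(g(m)+C)\,dx\) to the first estimate via the integrability remark following Assumption \ref{ass:g}; in other words, the second estimate is made a corollary of the first through the MFG identity, mirroring the formal computation of Proposition \ref{pro:apriori_est}. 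You instead never invoke \(H(x,Du)=g(m)\): you pair \(m+1\in L^{\alpha'}(\Omega)\) against \(|Du|^{\beta}\in L^{\alpha}(\Omega)\) (valid because \(\gamma=\alpha\beta\)) by H\"older, which is more elementary and purely a matter of exponent bookkeeping. Your version also adds two things the paper leaves implicit: (i) the minimality-plus-coercivity step \(\mathcal{I}[u]\le\mathcal{I}[\psi]\le C\) that bounds \(\int_\Omega|Du|^{\gamma}\,dx\) by the data, so the final constant genuinely depends only on the problem data rather than on \(u\); and (ii) an explicit reconciliation of the exponent \(|Du|^{2}\) appearing in the statement with the natural exponent \(|Du|^{\beta}\) (the paper's own proof silently needs \(\beta\ge 2\), or else \(|Du|^{2}\) should read \(|Du|^{\beta}\) as in Proposition \ref{pro:apriori_est}). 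The trade-off is that the paper's argument displays the structural fact that \(\int_\Omega(m+1)H(x,Du)\,dx=\int_\Omega(m+1)g(m)\,dx\), which is the estimate one would want when the cruder H\"older pairing is unavailable, whereas yours is shorter and self-contained given Corollary \ref{cor:m_g_m}.
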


\begin{proof}
	The first estimate is a direct consequence of Corollary~\ref{cor:m_g_m}.

	For the second estimate, we recall that \(m = G'(H(x,Du))\), and \(g(\cdot)\) is an inverse function of \(G'(\cdot)\), we have
	\(H(x, Du) = g(m)\), and consider an estimate
	\[ \begin{split}
		\int_\Omega (m+1)H(x,Du)\dx = \int_\Omega (m+1)g(m)\dx.
	\end{split}\]
	From Assumptions~\ref{assume:g} and~\ref{assume:H}, we obtain
	\[ \begin{split}
		\int_\Omega \dfrac{(m+1) |Du|^\a}{C}\dx \leq \int_\Omega (m+1)(g(m) +C)\dx \leq
		\int_\Omega C m g(m) + C < C.
	\end{split}\]
\end{proof}

\section{The Correspondence Result} \label{sec:Correspondance}

This section establishes the correspondence result stated in Theorem~\ref{thm:correspondance}. For clarity, the proof is divided into two major parts, each addressed in a dedicated subsection. Recall that we are working under Assumption~\ref{assume:g} and Assumption~\ref{assume:G}, which are consistent due to Lemma~\ref{lem:EquivgG}.

\subsection{From the variational formulation to the MFG}\label{subsec:DerivationOfELeq}

In this subsection, we rigorously establish that the MFG system \eqref{eq:MFG} arises as the Euler–Lagrange equation of Problem~\ref{problem:2} and that the boundary conditions \eqref{eq:BC} are naturally induced.

\paragraph{\emph{Proof of Theorem~\ref{thm:correspondance}} (\emph{I}).}

Let \(u \in \mathcal{U}\) be a minimizer of \eqref{eq:Minimization} and set
\begin{equation} \label{eq:VarFormToMFG_m=Gprim(H)}
    m(x):= G'(H(x,Du)).
\end{equation}
We now calculate the Gateaux derivative using four different perturbation functions \(v \in C^\infty(\widebar{\Omega}) \).
\begin{enumerate}
    \item \textbf{Interior equations:}
    Consider a function \(v \in C_c^\infty(\Omega)\). Define a map \(i(\ep) = \mathcal{I}[u+\ep v]\) for all \( \ep \in \R \).
    We see that \(i(0) \leq i(\ep)\), hence, \(i'(0) = 0\). In particular, due to Lemma~\ref{lem:boundOnDiffGepG} and the dominated convergence theorem, we have
    \[ \begin{split}
		0  &= \frac{d}{d \ep} \int_\Omega G\left( H(x, Du+\ep Dv) \right)\dx \Big\vert_{\ep = 0}\\
        &= \int_\Omega G'\left( H(x, Du) \right)D_pH(x,Du)\cdot Dv\dx \\
	\end{split}
    \]
    Since \( v\in C_c^\infty(\Omega) \) is arbitrary, we have $\Div (m D_pH(x,Du))=0$ in the sense of distributions.
    Accordingly,
    we obtain
    \begin{equation}\label{eq:step1PDE}
    	\begin{cases}
    		G'(H(x,Du)) = m    & \text{ for all } x \in \Omega,   \\
    		-\Div(m D_pH(x,Du)) = 0   & \text{ for all } x \in \Omega,
    	\end{cases}
    \end{equation}
    which is the PDE system governing the interior structure of the MFG.

    \item \textbf{The Neumann boundary condition:}
    Consider a function \(v \in C^\infty(\widebar{\Omega})\) such that \(v\equiv 0\) on \(\Gamma_D\). Define a map \(i(\ep) = \mathcal{I}[u+\ep v]\) for all \( \ep \in \R \).
    We see that \(i(0) \leq i(\ep)\), hence, \(i'(0) = 0\). In particular,
    \begin{equation}\label{eq:step2NeuBC1}
        \begin{split}
    		0 & = \frac{d}{d \ep}\left(\int_\Omega G\left( H(x, Du+\ep Dv) \right)\dx - \int_{\Gamma_N} j (u+\ep v) \d s\right)\Big\vert_{\ep = 0}\\
            &= \int_\Omega G'\left( H(x, Du) \right)D_pH(x,Du)\cdot Dv\dx - \int_{\Gamma_N} j v \d s\\
    		&= \int_\Omega m \, D_pH(x,Du)\cdot Dv\dx - \int_{\Gamma_N} j v \d s.
        \end{split}
    \end{equation}
  { 
  The previous steps ensure that
\[
\Div\bigl(m D_pH(x,Du)\bigr)=0
\quad \text{in the sense of distributions in } \Omega.
\]
Moreover, since \(m D_pH(x,Du) \in L^{\gamma'}(\Omega;\mathbb{R}^d)\), it follows that
\[
m D_pH(x,Du) \in W^{\gamma'}(\Div;\Omega),
\]
so that its normal trace on \(\Gamma\) is well defined by Theorem~\ref{thm:mainNormalTrace}.
  }
   Also, because $u\in W^{1, \gamma}$ and $m\in L^{\beta+1}$, using Assumption \ref{assume:H}, we get $m D_pH(x, Du)\in L^{\gamma'}$. Thus, $m D_pH(x, Du)\in W^{\gamma'}( \Div; \Omega)$.
    Accordingly, we can invoke Theorem~\ref{thm:mainNormalTrace}, and establish that
    \[
        \int_{\Gamma_N} j \, v \d s = \int_\Omega m \, D_pH(x,Du)\cdot Dv\dx = \int_{\Gamma_N}  v\, mD_pH(x,Du) \cdot \nu  \d s.
    \]

    Since \(v\) is arbitrary, we obtain
    \begin{equation}\label{eq:step2NeuBC2}
    		m D_pH(x,Du)\cdot \nu = j \qquad \text{ for all } x \in \Gamma_N.
    \end{equation}

    \item \textbf{The no-entry condition:}
    Consider a function  \(v \in C^\infty(\widebar{\Omega})\) such that \(v(x) \leq 0\)  for all \(x \in \Gamma_D\). Define a map \(i(\ep) = \mathcal{I}[u+\ep v]\) for all \( \ep \geq 0 \). Note that  \( u+\ep v \in \mathcal{U} \) for all \( \ep \geq 0 \); hence, \( i(\ep) \geq i(0) \). Using \eqref{eq:step1PDE} and \eqref{eq:step2NeuBC2} above, we subsequently have
    \[
        \begin{split}
            0 \leq i'(0) &=  \int_\Omega m\, D_pH(x, Du) \cdot Dv \dx - \int_{\Gamma_N} j \, v  \d s \\
    	  & =    \int_{\Gamma} v \, m D_pH(x, Du) \cdot \nu \d s-\int_{\Gamma_N}  j \, v  \d s =\int_{\Gamma_D} v \, m D_pH(x, Du) \cdot \nu \d s.
        \end{split}
    \]
    Since \(v|_{\Gamma} \leq 0\) is arbitrary, we obtain
    \begin{equation} \begin{split} \label{eq:step2BC}
		m D_pH(x, Du) \cdot \nu  \leq 0 \qquad \text{on } \Gamma_D.
	\end{split}\end{equation}

    \item \textbf{The contact-set condition:}
    Consider an arbitrary function  \(v \in C^\infty(\widebar{\Omega})\). Define a map \(i(\ep):= \mathcal{I}[u+\ep (\psi-u)v]\) for all \( \ep \in (-1,1) \) small enough so that \(\sup_{\Gamma_D} |\ep v| < 1\). Note that,
    \[
        v (\psi-u) \in W^{1,\gamma}(\Omega) \quad \text{ and } \quad u+\ep v (\psi-u) \leq \psi \enskip \text{ on } \Gamma_D;
    \]
    hence, \(u+ \ep v (\psi-u) \in \mathcal{U}\). Subsequently, \(i'(0) = 0\) and, similar to before,
    \[
        0  = 	\int_{\Gamma_D}  v (\psi-u) \, m D_pH(x, Du) \cdot \nu \d s.
	\]
    Since \( v \) is arbitrary, we obtain
    \begin{equation} \begin{split} \label{eq:step3BC}
		(\psi-u) m D_pH(x, Du) \cdot \nu = 0 \qquad \text{on } \Gamma_D.
	\end{split}\end{equation}
\end{enumerate}
Combining \eqref{eq:step1PDE} with \eqref{eq:step2NeuBC2}--\eqref{eq:step3BC} gives us the MFG \eqref{eq:MFG}-\eqref{eq:BC}, implying that the MFG System in Problem~\ref{problem:1} is the Euler-Lagrange equation for the variational formulation in Problem~\ref{problem:2}. Consequently, any minimizer of Problem~\ref{problem:2} is a solution to Problem~\ref{problem:1}. \hfill $\square$.

\subsection{From the MFG to the variational formulation}\label{subsec:MFGtoVarForm}

What we have established above in Subsection~\ref{subsec:DerivationOfELeq} is the first part of Theorem~\ref{thm:correspondance}. Namely, we proved that a minimizer for Problem~\ref{problem:2} produces a weak solution to Problem~\ref{problem:1}. In this subsection, we prove the converse, which states that a weak solution to Problem~\ref{problem:1} produces a minimizer for Problem~\ref{problem:2}.

\begin{proof}[Proof of Theorem~\ref{thm:correspondance}(II)]
Let \((m,u)\) be a weak solution to Problem~\ref{problem:1} in the sense of Definition~\ref{def:WeakSolution}.
We will show that for all \(w \in \mathcal{U}\),
	\[ \begin{split}
			\mathcal{I} [u] \leq  \mathcal{I} [w].
		\end{split}\]
	Indeed, due to the convexity of \(G\) and \(H\), we have
	\begin{align*}
		\mathcal{I} [w]-  \mathcal{I} [u]
		=    & \int_\Omega G(H(x, Dw)) - G(H(x, Du)) \dx - \int_{\Gamma_N} j (w - u)  \d s       \\
		\geq & \int_\Omega G'(H(x, Du))  D_pH(x, Du) D(w - u) \dx - \int_{\Gamma_N} j (w - u)  \d s\\
        = & \int_{\Omega} m  D_pH(x, Du) D(w - u) \dx - \int_{\Gamma_N} j (w - u)  \d s,
	\end{align*}
    where we used that \( G'(H(x, Du)) = G'(g(m)) = m\). Now, by our definition of weak solutions (Definition~\ref{def:WeakSolution}) and the normal trace theorem (Theorem~\ref{thm:mainNormalTrace}), we obtain
	\[
        \begin{split}
        \mathcal{I} [w] -  \mathcal{I} [u] \geq & \int_{\Omega} m  D_pH(x, Du) D(w - u) \dx - \int_{\Gamma_N} j (w - u)  \d s \\
    	=& \int_{\Gamma} (w - u)\, m  D_pH(x, Du)\cdot\nu \d s- \int_{\Gamma_N} j (w - u)  \d s\\
    	=& \int_{\Gamma_D} (w - u)\, m  D_pH(x, Du)\cdot\nu \d s.
        \end{split}
    \]

    Finally, due to our assumptions, \( (w-\psi) \, m  D_pH(x, Du)\cdot\nu \geq 0 \) and \( (\psi - u)\, m  D_pH(x, Du)\cdot\nu  = 0 \)  on \( \Gamma_D \), giving us that
    \[
        \mathcal{I} [w] -  \mathcal{I} [u] \geq  \int_{\Gamma_D} [(w - \psi)+(\psi-u)]\, m  D_pH(x, Du)\cdot\nu \d s \geq 0.
    \]
    This concludes the proof.  
    \end{proof}

\section{Monotonicity and the uniqueness of solution}\label{sec:monoton_uniquenessMFG}
In this section, we establish uniqueness results for the MFG system by employing the monotonicity techniques developed in \cite{lasryMeanFieldGames2007}. We begin with a discussion on the related monotone MFG operator, which was introduced in \cite{FGT1}. Subsequently, we demonstrate the uniqueness of the density function \( m \), while the uniqueness of the gradient \( Du \) is guaranteed in the region where the density remains strictly positive, i.e., where \( m > 0 \).

\subsection{The MFG operator}

Set \( \cX:= L^{\b+1}(\Omega)\times W^{1,\gamma}(\Omega) \) and let \(\cX'\) be its dual. Let \( \cY \subset \cX \) be the family of all functions \( (m,u) \in \cX \) that satisfy the conditions:
\begin{enumerate}[label=\roman*.]
        \item The density $m$ is nonnegative.
        \item The Dirichlet boundary condition in \eqref{eq:BC} is satisfied in the trace sense.
        \item The divergence of the flow, \( -\Div( mD_pH(x,Du)) \), is in \(L^{\gamma'}(\Omega) \).
        \item The Neumann boundary conditions in \eqref{eq:BC} are satisfied in the \emph{Neumann trace} sense.
\end{enumerate}
The \emph{MFG operator} \( A : \cY \to \cX' \) we investigate in this section is given by
\begin{equation} \label{eq:MFGop}
		A \begin{bmatrix} m \\[.1em] u \end{bmatrix}  =  \begin{bmatrix} -H(x,Du)+g(m) \\[.25em] -\Div(mD_pH(x,Du))\end{bmatrix},
\end{equation}
which was introduced in \cite{FGT1} inspired by the uniqueness proof in \cite{lasryMeanFieldGames2007}. The action of this operator is given by the pairing
\begin{equation}\label{eq:A_prod}
	\begin{split}
		\Big\langle A \begin{bmatrix} m \\[.1em] u \end{bmatrix}  \, ,\,  \begin{bmatrix}  \mu \\[.1em]  v  \end{bmatrix}  \Big\rangle = \int_\Omega (-H(x,Du)+g(m)) \, \mu\dx &-  \int_\Omega \Div(mD_pH(x,Du)) \,  v  \dx\\
        = \int_\Omega (-H(x,Du)+g(m)) \,  \mu\dx &+ \int_\Omega mD_pH(x,Du)\cdot D v  \dx \\
        &\phantom{44444} -\int_{\Gamma} v \,\, mD_pH(x,Du)\cdot \nu \d s.
\end{split}
\end{equation}

This subsection's main objective is to establish the MFG operator's monotonicity. This property, which is defined as follows, allows us to prove the MFG's uniqueness results.

\begin{definition}  Let \( \cX \)  be a Banach space and \( \cX' \) its dual. An operator \( A : \cY \subset \cX \to \cX' \) is \emph{monotone} if and only if, for all \(a,b \in \cY\), it satisfies
	\[ \begin{split}
		\left \langle A(a)-A(b)\,\, ,\,\, a-b \right \rangle \geq 0.
	\end{split}\]
\end{definition}

\begin{remark}
    The well-definedness of the pairing in \eqref{eq:A_prod} relies on the integrability of the constituent terms. While Assumptions~\ref{assume:data1}–\ref{assume:g} ensure that
    \[
        -H(x, Du) + g(m) \in L^{(\beta+1)/\beta}(\Omega)
        \quad \text{and} \quad
        m D_p H(x, Du) \in L^{\gamma'}(\Omega; \mathbb{R}^d)
    \]
    for all pairs \( (m,u) \in L^{\b+1}(\Omega)\times W^{1,\gamma}(\Omega) \), imposing the additional constraint
    \[
        \Div(m D_p H(x, Du)) \in L^{\gamma'}(\Omega)
    \]
    allows us to apply the normal trace theorem (Theorem~\ref{thm:mainNormalTrace}) and guarantees that
    \[
        m D_p H(x, Du) \cdot \nu \in W^{-1 + 1/\gamma', \gamma'}(\Gamma).
    \]
\end{remark}

\subsection{The monotonicity of the MFG operator}
The following result establishes the monotonicity of the mean-field game operator.
\begin{prop} \label{prop:MonotonicityOfA}
	The operator \(A: \cY \to   \cX'\), as defined in \eqref{eq:MFGop}, is monotone on \( \cY \).
\end{prop}

\begin{proof}
    Let \((m,u)\) and \((\mu, v )\) be two pairs in \( \cY \). We have
    \begin{equation}
    \begin{split} \label{eq:pfOfMono1}
        \Big\langle A \begin{bmatrix} m \\[.1em] u \end{bmatrix}  - A \begin{bmatrix} \mu \\[.1em]  v  \end{bmatrix}  \, &,\,  \begin{bmatrix}  m-\mu \\[.1em] u- v  \end{bmatrix} \Big\rangle \\
        = & \int_\Omega\left[g(m)-g(\mu)\right] (m-\mu)\dx\\
        &\quad + \int_\Omega\left[-H(x,Du)+H(x,D v ) \right] (m-\mu)\dx\\
        &\quad+\int_\Omega \left[mD_pH(x,Du) -\mu D_pH(x,D v )\right] (Du-D v ) \dx\\
        &\quad +\int_{\Gamma} \left[-mD_pH(x,Du) +\mu D_pH(x,D v )\right]\cdot\nu (u- v ) \ds.
    \end{split}
    \end{equation}
    Now, we notice that since \((m,u)\) and \((\mu, v )\) satisfy the boundary conditions \eqref{eq:BC}, we have, on the one hand,
    \begin{equation} \label{eq:pfOfMono2}
        mD_pH(x,Du)\cdot \nu = \mu D_pH(x,D v )\cdot \nu = j \qquad \text{on }\Gamma_N.
    \end{equation}
	On the other hand, on \(\Gamma_D\), we have
	\begin{equation} \begin{split}
			\label{eq:pfOfMono3}
			-(u-\psi) m D_pH(x, Du) \cdot \nu &= 0,\qquad
			(\psi- v ) \mu D_pH(x, D v ) \cdot \nu = 0,\\
			-(\psi- v ) m D_pH(x, Du) \cdot \nu &\geq 0,  \qquad	(u-\psi) \mu D_pH(x, D v ) \cdot \nu \geq 0.
	\end{split}\end{equation}
	Combining \eqref{eq:pfOfMono2} and \eqref{eq:pfOfMono3}, we obtain that
	\begin{equation} \begin{split} \label{eq:pfOfMono4}
			I_1:= \int_{\Gamma} (u- v ) \left[-mD_pH(x,Du) +\mu D_pH(x,D v )\right]\cdot\nu \d s \geq 0.
	\end{split}\end{equation}
	Moreover, because \(H(x,p)\) is convex in \(p\), we have
	\[ \begin{split}
		H(x,Du)-H(x,D v ) &\geq (Du-D v )\cdot D_pH(x,D v ),
	\end{split}\]
	and
	\[ \begin{split}
		H(x,D v )-H(x,Du) &\geq (D v -Du)\cdot D_pH(x,Du).
	\end{split}\]
	Multiplying the first inequality by \(\mu\) and the second by \(m\) and summing the two equations, we obtain
	\begin{equation} \label{eq:pfOfMono5}
            \begin{split}
                I_2:= \int_\Omega (m-\mu)&\left( H(x,D v )-H(x,Du)\right) \\
                + &\int_\Omega \left(mD_pH(x,Du)-\mu D_pH(x,D v )\right) \cdot (Du-D v ) \dx \geq 0.
    	\end{split}
        \end{equation}
	Finally, since \(g\) is monotone, we have that
	\begin{equation} \label{eq:pfOfMono6}
			I_3:= \int_\Omega(g(m)-g(\mu)) (m-\mu)\dx \geq 0.
    \end{equation}

	Due to inequalities \eqref{eq:pfOfMono4}, \eqref{eq:pfOfMono5}, and \eqref{eq:pfOfMono6}, inequality \eqref{eq:pfOfMono1} becomes
	\[ \begin{split}
		\Big\langle A \begin{bmatrix} m \\[.1em] u \end{bmatrix}  - A \begin{bmatrix} \mu \\[.1em]  v  \end{bmatrix}  \, ,\,  \begin{bmatrix}  m-\mu \\[.1em] u- v  \end{bmatrix} \Big\rangle = I_1+I_2+I_3 \geq 0.
	\end{split}\]
	This concludes the proof.
\end{proof}

\subsection{On the Uniqueness of Solution to the MFG}
\label{73}

Here, we use the previously proven monotonicity to establish that $Du$ is unique in the region where the density function $m$ is strictly positive, implying that the density function $m$ itself is also unique in $\Omega$.

\paragraph{\textit{\textbf{Proof of Theorem~\ref{thm:uniquenessMFG}.}}} The proof utilizes the Lasry-Lions monotonicity method \cite{ll2}.

    Using the notation in the proof of Proposition~\ref{prop:MonotonicityOfA}, we see that
	\[ \begin{split}
		A\begin{bmatrix} m \\[.1em] u \end{bmatrix}= A \begin{bmatrix} \mu \\[.1em]  v  \end{bmatrix} =  \begin{bmatrix} 0 \\ 0 \end{bmatrix} .
	\end{split}\]
	Therefore, due to inequalities \eqref{eq:pfOfMono4}, \eqref{eq:pfOfMono5}, and \eqref{eq:pfOfMono6}, we have \(I_1 = I_2 = I_3 = 0\). By the inequalities in \eqref{eq:pfOfMono3}, we see that \(I_1 = 0\) implies that
	\[ \begin{split}
		(\psi-u)\eta D_pH(x,D v )\cdot \nu = 0  \quad \text{and} \quad -(\psi- v )m D_pH(x,Du)\cdot \nu = 0.
	\end{split}\]
	This establishes 1. Now, since \(g\) is strictly monotone, we have that
	\[ \begin{split}
		(g(m)-g(\eta))(m-\eta) > 0
	\end{split}\]
    whenever \(\eta \neq m\).	However, because \(I_3 = 0\), we have that \((g(m)-g(\eta))(m-\eta) = 0\) almost everywhere in \(\Omega\). Hence, \(\eta = m\) almost everywhere in \(\Omega\). This establishes 2. For 3, we use the strict convexity assumption on \(H(x,p)\) in the variable \(p\). Indeed, given that \(I_2 =0\), we see that whenever \(m=\eta \neq 0\), then
	\[ \begin{split}
		H(x,Du)-H(x,D v ) = (Du-D v )\cdot D_pH(x,D v ).
	\end{split}\]
	However, this is only possible when \(Du=D v \). This ends the proof. \hfill $\square$

\subsection{On the uniqueness of minimizer}\label{sec:uniquness}

Given compatible coupling term \( g \) and mapping \( G \), the inherent correspondence established in Theorem~\ref{thm:correspondance} between Problem~\ref{problem:1} and Problem~\ref{problem:2} naturally induces a correspondence between results in the MFG setting and the variational formulation setting. In the variational formulation setting, this correspondence gives rise to the following two results that serve as the counterpart to the uniqueness result of Theorem~\ref{thm:uniquenessMFG}.

\begin{corollary}\label{cor:UniquenessInVarForm}
	Under Assumptions~\ref{assume:data1}--\ref{assume:H} and Assumption~\ref{assume:G}, let \(u,\,v \in \mathcal{U}\) be two minimizers of Problem~\ref{problem:2}. The following must hold:
	\begin{enumerate}
		\item For almost all \(x\in \Gamma_D\), we have
		\[
            (\psi-u)G'(H(x,D v )) D_pH(x,D v )\cdot \nu = (\psi- v )G'(H(x,Du)) D_pH(x,Du)\cdot \nu = 0.
		\]
		\item For almost all \(x \in \Omega\),
		\[
		  G'\bigl(H(x,Du(x))\bigr) = G'\bigl(H(x,Dv(x))\bigr).
		\]
		\item For almost all \(x \in \Omega\),
		\[
		  G'\bigl(H(x,Du(x))\bigr) \neq 0 \implies
		  Du(x) = Dv(x).
		\]
	\end{enumerate}
\end{corollary}

\begin{proof}
	The claim follows directly from Theorem~\ref{thm:correspondance} and Theorem~\ref{thm:uniquenessMFG}.
\end{proof}

As a subsequence of Corollary~\ref{cor:UniquenessInVarForm} above, we obtain the following corollary related to boundary behavior.
\begin{corollary}
	Under Assumptions~\ref{assume:data1}-\ref{assume:H} and Assumption~\ref{assume:G}, let \(u,\, v \in \mathcal{U}\) be two minimizers of Problem~\ref{problem:2}.	The following must hold:
	\begin{enumerate}
        \item For almost all \(x \in \Omega\),
		\[
		  G\bigl(H(x,Du(x))\bigr) = G\bigl(H(x,Dv(x))\bigr).
		\]
		\item
		\[
            \int_{\Gamma_N}j \, v\d s = \int_{\Gamma_N}j\, u \d s.
        \]
    \end{enumerate}
\end{corollary}
\begin{proof}

	\begin{enumerate}[leftmargin=1.5em]
	    \item Due Corollary~\ref{cor:UniquenessInVarForm}.3, we immediately obtain that
            \[
                G'(H(x,Du)) \neq 0 \implies G(H(x,Du))=G(H(x,Dv)).
            \]
            On the other hand, let
            \[
                G_0 = \inf_{z \in \R} G(z).
            \]
            Since \( G \) is increasing, whenever \( G'(z) = 0\), we have that \( G(z) = G_0\).
            { 
            On the other hand, by Corollary~\ref{cor:UniquenessInVarForm}.2 we have
\(G'(H(x,Du)) = G'(H(x,Dv))\).
Let \(G_0 = \inf_{z\in\mathbb{R}} G(z)\).
Since \(G\) is convex and increasing (on its support), whenever \(G'(z)=0\)
we must have \(G(z)=G_0\).
Therefore, if \(G'(H(x,Du))=0\), we have \(G'(H(x,Dv))=0\),
which implies \(G(H(x,Du))=G_0\) and \(G(H(x,Dv))=G_0\).
This establishes the first claim.
            }

        \item The second claim follows from the first. Because \( \mathcal{I} [u] =  \mathcal{I} [v]\), we have
        \[
        \begin{split}
            \mathcal{I} [u] &=  \int_\Omega G\left( H(x, Du) \right)\dx - \int_{\Gamma_N} j u \d s\\
            &=\int_\Omega G\left( H(x, Dv) \right)\dx - \int_{\Gamma_N} j v \d s + \int_{\Gamma_N} j v - \int_{\Gamma_N} j u \\
            &= \mathcal{I} [v]+ \int_{\Gamma_N} j v - \int_{\Gamma_N} j u.
		\end{split}\]
        Therefore,
        \[
            \int_{\Gamma_N} j v = \int_{\Gamma_N} j u.
        \]
    \end{enumerate}
    This concludes the proof.
\end{proof}

\appendix
\renewcommand{\theequation}{a.\arabic{equation}}

\section*{Appendix}
\section{The Neumann Trace and Neumann Boundary Condition}\label{appendix_A}

In this appendix, we address the weak formulation of the Neumann boundary condition for our MFG system under low-regularity assumptions, using the \emph{normal (Neumann) trace} theorem. A rigorous proof of the well-definedness of this operator is provided in \cite{Galdi2011} for the Banach space \(\tilde{H}_p(\Omega)\), corresponding to \(W^{p}(\Div;\Omega)\) in our notation.  A more accessible outline of the theorem in the Hilbert-space setting \(H(\Div;\Omega)\) appears in \cite{Tartar1}.
For completeness, we present the proof of this result, drawing on the techniques of \cite{Tartar1}, \cite{Galdi2011}, \cite{DiBenedetto1}, and \cite{evansPartialDifferentialEquations2010}.

{ 
 To keep notation consistent with the references used, throughout this appendix, we denote the Lebesgue integrability exponent by $p$ and its conjugate by $q$. In the context of the main results of this paper, these correspond to the exponents $\gamma'$ and $\gamma$, respectively. This is distinct from the momentum variable $p$ used in the Hamiltonian $H(x,p)$.
We refer to the normal trace as the \emph{Neumann trace} to avoid confusion with the standard boundary trace operator.
}

We note that, to maintain the coherence of this appendix, we use the standard exponent \( p \) for our \( L^p \)--spaces and denote its convex conjugate by  \( q \). So, in what follows, \( \frac{1}{p}+\frac{1}{q} = 1\). For the main body of this paper, { we substitute $p = \gamma$ and $q = \gamma'$.} We use the term \emph{Neumann trace} to avoid ambiguity when discussing the \emph{usual} boundary trace in the same context.

\subsection{The weak divergence}
Let \(\Omega \subset \R^d\) be an open, bounded domain with smooth boundary $\Gamma = \del \Omega$. Given sufficiently regular vector field \( \mathbf{u} \) and function \( \phi \), the divergence theorem can be expressed as the integration-by-parts identity
\begin{equation}  \label{eq:DivThm}
		\int_\Omega \phi \, \Div (\mathbf{u}) \dx = - \int_\Omega \mathbf{u} \cdot D\phi \dx + \int_{\Gamma} \phi \enskip \mathbf{u} \cdot \nu  \d s.
\end{equation}
This identity constitutes the foundational structure for the weak formulation of Neumann boundary conditions in divergence-form PDE.
Our goal is to provide sufficient conditions for the Neumann trace  \( \mathbf{u} \cdot \nu \) to be well-defined and to extend this identity to a larger class of vector fields whose divergence may only be weakly defined in the following sense.

\begin{definition} \label{def:weakDiv} The \emph{weak divergence} of a vector-field \( \mathbf{u} \in L^1(\Omega; \R^d) \) is the \emph{linear functional} \( \Div (\mathbf{u})  \in  L^1_{loc}(\Omega) \) that satisfies
\begin{equation}
	\label{wd}
    \langle \Div (\mathbf{u}) \, , \phi\rangle := \int_\Omega \Div (\mathbf{u})  \phi = - \int_\Omega  \mathbf{u}\cdot D\phi \dx
\end{equation}
	for every \(\phi \in C_c^\infty(\Omega)\).
\end{definition}
To extend this definition to \eqref{eq:DivThm}, which incorporates boundary traces, we need to use a class of test functions larger than \( \mathcal{D}(\Omega) = C^{\infty}_c(\Omega) \). In particular, we use the following family of test functions.
\begin{definition} \label{Def:TestFuns}
	We define the family of \emph{test functions} \( \mathfrak{D}(\widebar{\Omega}) \) by
    \[
        \{ \,  \phi|_{\widebar{\Omega}} \, \colon \,  \phi \in C^\infty(\R^d) \, \},
    \]
    which is the set of restrictions of \(C^\infty\)--functions that are defined on the entire Euclidean space \( \R^d \).
\end{definition}
\begin{remark}
    We denote this family of test functions \( \mathfrak{D}(\widebar{\Omega}) \) to avoid ambiguity with commonly used family of test functions \( \mathcal{D}(\Omega) = C^{\infty}_c(\Omega) \).
\end{remark}

\subsection{Divergence Sobolev space}

For the Neumann trace to be well-defined in the framework of weak solutions, we need to choose an appropriate function space and a domain \( \Omega \) whose boundary enjoys an appropriate level of regularity. The space we are considering in this section is the following \emph{divergence Sobolev space}.
\begin{definition} \label{def:DivSobolevSpace}
	Let \(\Omega \subset \R^d\) be an open, bounded domain, and suppose that $1 \leq p < \infty$.
	The \emph{divergence Sobolev space} \( W^{p}( \Div; \Omega) \) 	is the space of vector fields
	\[
    \begin{split}
			W^{p}(\Div; \Omega):= \{ \mathbf{w} \in L^p(\Omega; \R^d)\, : \, \Div(\mathbf{w}) \in L^p(\Omega)\},
    \end{split}
    \]
	where \( \Div(\mathbf{w}) \) is defined weakly according to Definition~\ref{def:weakDiv}. This space is equipped with the natural norm
	\[ \begin{split}
			\| \mathbf{w} \|_{W^{p}( \Div; \Omega)}  := \Big( \|\mathbf{w}\|_{L^p(\Omega; \R^d)}^p + \|\Div(\mathbf{w})\|_{L^p(\Omega)}^p\Big)^{1/p}.
		\end{split}\]
\end{definition}
\begin{remark}
    We use this name \emph{divergence Sobolev space} consistently with common \emph{divergence-measure fields} known in the literature \cite{ChenFrid2001}.
\end{remark}
The regularity condition we require the domain \(\Omega\) to have on its boundary is the following variant of the \emph{segment condition} (see \cite{evansPartialDifferentialEquations2010}, \cite{DiBenedetto1}).
\begin{definition}
A bounded domain \(\Omega \subset \R^d\) satisfies the \emph{segment condition} on its boundary \( \Gamma = \del \Omega\) if there are two finite collections
\[
\{ x_n \}_{n=1}^N \subset \Gamma\quad  \text{and} \quad  \{ y_n \}_{n=1}^N \subset \R^d \backslash \{0\}
\]
and two positive constants \( r \) and \( t^* \) such that
	\begin{enumerate}[label = \roman*.]
		\item the collection \( \mathcal{B}_N = \{ B_{r}(x_{n}) \}_{n=1}^N \)  of open balls covers \( \Gamma \), and
		\item for \( n=1,..., N \), we have
		      \[ \begin{split}
				      B_{t}(x+ty_n)  \subset \Omega
			      \end{split}\]
            for all \( x \in B_{r}(x_n) \cap \widebar{\Omega} \) and \(t \in (0,t^*)\).
	\end{enumerate}
\end{definition}

\begin{remark}
	This condition holds when \(\Gamma\) is \(C^1\) \cite{evansPartialDifferentialEquations2010}.
\end{remark}

\begin{lemma}
	Let \( \Omega \subset \R^d \) be an open, bounded domain. The space \(W^{p}( \Div; \Omega)\) is a Banach space.
\end{lemma}
\begin{proof}
	We only prove the completeness property. The remaining normed--space properties are trivial.
    \begin{enumerate}
        \item Consider an arbitrary Cauchy sequence in \(\left(\mathbf{u}_n\right)_{n\geq 1}\) in  \(W^{p}( \Div; \Omega)\).
	Then, the sequence \(\left(\mathbf{u}_n\right)_{n\geq 1}\) and \(\left(\Div(\mathbf{u}_n)\right)_{n\geq 1}\) are Cauchy sequences in \(  L^p(\Omega; \R^d) \) and \(L^p(\Omega)\), respectively.
	By the completeness of \( L^p \)--spaces, there is a vector field \(\mathbf{u} \in  L^p(\Omega; \R^d)\) and a function \( v \in L^p(\Omega)\) such that
    \[
        \mathbf{u}_m \to \mathbf{u}  \quad \text{and} \quad  \Div(\mathbf{u}_m) \to v \in L^p(\Omega).
    \]

    \item Note that
	\[
        \begin{split}
			\int_\Omega \Div (\mathbf{u})  \phi = - \int_\Omega \mathbf{u}\cdot D\phi &= \lim_{n\to \infty} - \int_\Omega \mathbf{u}_n\cdot D\phi \\
            &= \lim_{n \to \infty} \int_\Omega \Div (\mathbf{u}_n) \phi
			= \int_\Omega v \phi.
		\end{split}
    \]
        for all \(\phi \in C^\infty_c (\Omega)\). Due to the density of \(\phi \in C^\infty_c (\Omega)\) in \( L^{p'} (\Omega) \), we have that
    \[
		\|\Div (\mathbf{u}) - v\|_{L^{p} (\Omega)} = \sup_{\substack{\phi \in C^\infty_c (\Omega) \\ \|\phi \|\leq1}}\int_\Omega (\Div (\mathbf{u})-v)  \phi = 0.
    \]
    This shows that \( \Div (\mathbf{u}) = v \) almost everywhere in \( \Omega \).
    \end{enumerate}
    This concludes the proof.
\end{proof}

Our goal in this section is to define a \emph{trace operator} on \(W^{p}( \Div; \Omega)\) in a way that is consistent with the usual trace definition.
We start by establishing the following result.

\begin{prop} \label{prop: density}
	Let \(\Omega\) be a bounded domain in \(\R^d\) that satisfies the segment condition described above.
	Then, \(\left(\mathfrak{D} (\widebar{\Omega})\right)^d\) is dense in \(W^{p}( \Div; \Omega)\).
\end{prop}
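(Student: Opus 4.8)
The plan is to prove the density of $(\mathfrak{D}(\bar\Omega))^d$ in $W^{\Div,p}(\Omega)$ by a standard two-step argument: first reduce to functions that are smooth up to the boundary via a partition of unity and local translation, then mollify. The segment condition is precisely the hypothesis that enables the "translate inward" maneuver, so the whole strategy is organized around exploiting it.

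First I would localize using a partition of unity subordinate to the finite cover $\{B_{r_i}(x_0^i)\}$ of $\partial\Omega$ together with one interior open set $B_0 \Subset \Omega$ covering the rest of $\bar\Omega$. Given $w \in W^{\Div,p}(\Omega)$, I write $w = \sum_i \chi_i w$ and treat each piece separately; the interior piece $\chi_0 w$ is compactly supported inside $\Omega$ and can be approximated directly by standard mollification. The point to check is that multiplication by a smooth cutoff $\chi_i$ preserves membership in $W^{\Div,p}$, since $\Div(\chi_i w) = \chi_i \Div(w) + D\chi_i \cdot w$ and both terms are in $L^p$; this keeps the argument inside the space.

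Next, for each boundary piece $w_i := \chi_i w$, supported in $B_{r_i}(x_0^i)\cap \bar\Omega$, I would use the vector $y^i$ from the segment condition to translate: define $w_i^t(x) := w_i(x + t y^i)$ for small $t \in (0,1)$. The segment condition guarantees that the support of the translated function, restricted near the boundary, is pushed strictly into $\Omega$, so that $w_i^t$ is defined on a neighborhood of $\operatorname{supp}(\chi_i)\cap\bar\Omega$ inside $\Omega$ and leaves a collar of room. Translation commutes with the (weak) divergence, so $\Div(w_i^t) = (\Div w_i)^t$, and by continuity of translation in $L^p$ we get $w_i^t \to w_i$ in $W^{\Div,p}$ as $t \to 0$. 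Because $w_i^t$ now has a neighborhood of room inside $\Omega$, I can mollify it with a mollifier of radius $\varepsilon \ll t$ to obtain a genuinely smooth function $w_i^{t,\varepsilon} \in (\mathfrak{D}(\bar\Omega))^d$ with $w_i^{t,\varepsilon}\to w_i^t$ in $W^{\Div,p}$; here again mollification commutes with weak divergence. A diagonal choice of $(t,\varepsilon)$ then yields approximants in $(\mathfrak{D}(\bar\Omega))^d$ converging to $w_i$, and summing over $i$ and the interior piece finishes the proof.

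The main obstacle I expect is the bookkeeping around the translation step — ensuring that $w_i^t$ is actually well-defined on all of $\operatorname{supp}(\chi_i) \cap \bar\Omega$ and that, after translation, there is a uniform positive gap between the support and $\partial\Omega$ so the subsequent mollification stays inside $\Omega$. This is exactly where the quantitative form of the segment condition, namely $B_t(x + t y^i) \subset B_{2r_i}(x_0^i)\cap\Omega$ for $x \in B_{r_i}(x_0^i)\cap\bar\Omega$, must be invoked carefully: it provides both that the translate lands in $\Omega$ and that a ball of radius $t$ around each translated point remains interior, which is precisely the room needed for mollification at scale $\varepsilon < t$. The only other point requiring care is verifying that translation and mollification genuinely commute with the \emph{weak} divergence (not merely the classical one), which follows by testing against $\phi \in C_c^\infty$ and changing variables or using $\Div(\rho_\varepsilon * w) = \rho_\varepsilon * \Div(w)$; these are routine once set up but should be stated explicitly to keep the argument rigorous.
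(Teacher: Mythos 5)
Your proposal is correct and follows essentially the same strategy as the paper's proof: exploit the segment condition to translate the function inward along $y^i$, then mollify, and verify convergence of both the function and its weak divergence in $L^p$. In fact, your write-up is more complete than the paper's, which works only on a single boundary ball (showing $\eta_t \ast \Div u_t \to \Div u_t$ via a Lebesgue-differentiation argument) and leaves the partition-of-unity patching, the interior piece, and the cutoff-multiplication identity $\Div(\chi_i w) = \chi_i \Div(w) + D\chi_i \cdot w$ implicit.
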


\begin{proof} Let \(\mathbf{u} = (u^1,...,u^d) \in W^{p}( \Div; \Omega)\).
\begin{enumerate}
    \item Let \(B_r({x_0})\in \mathcal{B}_N\), where \(\mathcal{B}_N\) is the cover from the segment condition, and let \(y_0\) be the vector for which \(B_{t}(x+ty_0) \subset \Omega\) for all \(t \in (0,t^*)\) and \(x \in \widebar{\Omega} \cap B_r(x_0)\).

	\item  For every \(t\in (0,t^*)\) and \(x \in \widebar{\Omega}\cap B_r(x_0)\),  define \(\mathbf{u}_t\) to be
    \[
        \mathbf{u}_{t}(x) = (\mathbf{u}^1_{t},...,\mathbf{u}^d_{t}) := \mathbf{u}(x+ty_0),
    \]
    and define \(\mathbf{v}_t\) to be its mollified version
    \[
        \mathbf{v}_t := \eta_t\ast \mathbf{u}_t(x),
    \]
    where \(\eta_t(x) = c_t\eta(x/t)\) is the standard mollifier.

	\item   First, note that
            \begin{equation} \label{eq:DensityThm1}
            \|\Div \mathbf{v}_t - \Div \mathbf{u}\|_{L^p(B_r(x_0))} \leq \|\Div \mathbf{v}_t - \Div \mathbf{u}_t\|_{L^p(B_r(x_0))} +\|\Div \mathbf{u}_t - \Div \mathbf{u}\|_{L^p(B_r(x_0))} .
            \end{equation}
            On the right-hand side, the second term vanishes in the limit due to the continuity of translations in \(L^p\) spaces.
            The first term vanishes in the limit
            following the technique used for the classical weak derivative (see \cite{evansPartialDifferentialEquations2010}). Thus, as \( t \to 0\), \( \mathbf{v}_t \to \mathbf{u}\) strongly in \( W^{p}(\Div; \Omega)\).
    \item Let \( \Omega_0 \subset\subset  \Omega\) be an open subset, compactly contained in \( \Omega \), such that
    \[
       \Omega \subset \Omega_0 \cup \; \bigcup_{n=1}^N B_{r}(x_n).
    \]
    Denote by \( \mathbf{v}^n_t \) the smooth approximation of \( \mathbf{u} \) in \( B_{r}(x_n) \). Also, denote by \( \mathbf{v}^0_t \) the smooth approximation of \( \mathbf{u} \) in \( \Omega_0 \), which exists following the above procedure without the need for translation. Let \( \{ \xi_n \}_{n=0}^N\) be a smooth partition of unity corresponding to the open cover above, and define
    \[
        \mathbf{v}_t = \sum_{n=0}^N \xi_n\mathbf{v}_t^n.
    \]
    Since the summation is finite, and \( \sum_{n=0}^N \xi_n =1 \), we have that
    \[
    \begin{split}
        \|\mathbf{v}_t -\mathbf{u}\|_{W^{p}( \Div; \Omega)}  &\leq \sum_{n=0}^N (\|\Div (\xi_n \mathbf{v}^n_t) - \Div(\xi_n \mathbf{u})\|_{L^p(\Omega)} + \|\xi_n \mathbf{v}^n_t - \xi_n \mathbf{u}\|_{L^p(\Omega; \R^d)})\\
    &\leq C \|\mathbf{v}^0_t -\mathbf{u}\|_{W^{p}( \Div; \Omega_0)} + C \sum_{n=1}^N \| \mathbf{v}^n_t - \mathbf{u}\|_{W^{p}( \Div; B_{r}(x_n))},
    \end{split}
    \]
    which goes to 0 as \( t \to 0\). \qedhere
\end{enumerate}
\end{proof}
\begin{remark}
    We omitted the part on the convergence of \(\|\xi_n \mathbf{v}^n_t - \xi_n \mathbf{u}\|_{L^p(\Omega; \R^d)}\) since it is standard (see \cite{evansPartialDifferentialEquations2010} for example).
\end{remark}

\subsection{The Neumann Trace}\label{sec:normalTrace}

Let \(\tr_0\) be the usual (Dirichlet) trace operator on the space \(W^{1,q}(\Omega)\). We observe that, given a smooth vector-field \(\mathbf{u} \in \left(\mathfrak{D}(\widebar{\Omega})\right)^d\) and a function \(v \in W^{1,q}(\Omega)\), we have
\begin{equation} \begin{split} \label{eq: Trace on Smooth}
		\int_\Omega \left[\mathbf{u} \cdot Dv + v\, \Div \mathbf{u}  \right]\,\dx = \int_{\Gamma} \mathbf{u} \cdot \nu \enskip \tr_0 v \,\d s,
	\end{split}\end{equation}
where \(\nu\) is the exterior normal to \( \Gamma \). We aim to use the density result above to extend the Neumann trace \(\mathbf{u} \cdot \nu \) to vector-fields \( \mathbf{u}  \in W^{p}( \Div; \Omega) \) using this identity. In particular, we want to define \( \mathbf{u} \cdot \nu := \tr_1(\mathbf{u} )\) as the \( W^{p}( \Div; \Omega) \)--completion of a bounded linear map \( \tr_1 : \left(\mathfrak{D}(\widebar{\Omega})\right)^d \to  W^{-1+\frac{1}{q}, p}(\Gamma)\), where
\[
    W^{-1+\frac{1}{q}, p}(\Gamma) := \left( W^{1-\frac{1}{q}, q}(\Gamma) \right)'
\]
is the dual of the \emph{trace space} of \( W^{1, q}(\Omega) \).

We establish the validity of this definition via the extension of functions \( v \in W^{1-\frac{1}{q}, q}(\Gamma)\), defined on the boundary, to the interior of the domain \( \Omega \) via an appropriate lifting operator, which is provided by the following lemma.
\begin{lemma} \label{lemma:liftingOp}
	Let \(\Omega\) be a bounded, open, connected, \(C^1\)--domain.
	Then, there exists a \emph{lifting operator} \(l_0:  W^{1-\frac{1}{q},q}(\Gamma)\to W^{1,q}(\Omega)\) such that
	\[ \begin{split}
			\|l_0v\|_{W^{1,q}(\Omega)} \leq C \|v\|_{W^{1-\frac{1}{q},q}(\Gamma)}
		\end{split}\]
	for every \(v \in C^\infty(\Gamma)\).
	The constant \(C\) depends only on \(\Omega\) and \(q\).
\end{lemma}

\begin{proof}
See  \cite{DiBenedetto1}, Chapter 10, Theorem 17.1c and Theorem 18.2c.
 \end{proof}

Equipped with the preceding technical tools, we now extend the identity \eqref{eq: Trace on Smooth} to an arbitrary \(\mathbf{u} \in W^{p}( \Div; \Omega)\).

\begin{thm}\label{thm:AppendixNormalTrace}
	Let \(\Omega\) be an open, bounded, \(C^1\)--domain. Then, there is a bounded linear operator
	\[ \begin{split}
			\tr_1: W^{p}( \Div; \Omega) \to W^{-1+\frac{1}{q} , \, p}(\Gamma)
		\end{split}\]
	satisfying the following properties.
	\begin{enumerate}[label = (\roman*)]
		\item The operator \(\tr_1\) is consistent with point-wise evaluation on the boundary for smooth function; that is,
        \[
            \tr_1(\mathbf{u}) = \mathbf{u}|_{\Gamma}\cdot \nu \quad \text{ for all }\, \mathbf{u} \in \left(\mathfrak{D} (\widebar{\Omega})\right)^d.
        \]
		\item  For each \(\mathbf{u} \in  W^{p}( \Div; \Omega)\),  we have that  \(\tr_1(\mathbf{u}) \in W^{-1+\frac{1}{q}, p}(\Gamma)\). That is, \( \tr_1(\mathbf{u})  \) is linear functional on \(W^{1-\frac{1}{q}, q}(\Gamma)\) bounded by

        \begin{equation} \label{eq:boundingTheLinearOperator1}
			      \| \tr_1(\mathbf{u})\|_{W^{-1+\frac{1}{q}, p}(\Gamma)} \leq C\|\mathbf{u}\|_{W^{p}( \Div; \Omega)}
		\end{equation}
              for some constant \( C \) independent of \( \mathbf{u} \).
	\end{enumerate}

\end{thm}

\begin{proof}
\begin{enumerate}
    \item For any smooth vector-field \( \mathbf{u} \in \left(\mathfrak{D} (\widebar{\Omega})\right)^d\) (see Definition~\ref{Def:TestFuns}), we define the Neumann trace of \(\mathbf{u}\) in the usual sense:
    \[
    \tr_1(\mathbf{u}) = \mathbf{u}|_{\Gamma} \cdot \nu.
    \]

    \item Fix \( v \in W^{1-\frac{1}{q},q}(\Gamma) \), and define the linear functional
    \[
        \Lambda_{v} \mathbf{u} = \langle \tr_1(\mathbf{u}), v \rangle
    \]
    for all \( \mathbf{u} \in \left(\mathfrak{D} (\widebar{\Omega})\right)^d \).
    Now, using the lifting operator from Lemma~\ref{lemma:liftingOp}, we obtain

    \begin{equation}\label{appenEq:1proof}
        \begin{split}
            |\Lambda_{v} \mathbf{u}| = |\langle \tr_1(\mathbf{u}), v \rangle |
            &= \big | \int_{\Gamma} \tr_1(\mathbf{u}) \, v \d s \big |\\[.25em]
            &=  \big |\int_\Omega \mathbf{u} \cdot D(l_0v)  + \Div (\mathbf{u}) \enskip l_0v \dx \big | \\[.25em]
            &\leq \|\mathbf{u}\|_{W^{p}( \Div; \Omega)} \enskip \| l_0v\|_{W^{1,q}(\Omega)}\\[.25em]
            &\leq C\|\mathbf{u}\|_{W^{p}( \Div; \Omega)} \enskip \|v\|_{W^{1-\frac{1}{q},q}(\Gamma)}.
		\end{split}
	\end{equation}
    Therefore, \( \Lambda_{v} \) is a bounded linear functional for all \( \mathbf{u} \in \left(\mathfrak{D} (\widebar{\Omega})\right)^d \cap W^{p}( \Div; \Omega)\).

    \item Because \(\left(\mathfrak{D} (\widebar{\Omega})\right)^d\) is dense in \(W^{p}( \Div; \Omega)\), we can extend \( \Lambda_{v} \) to act on arbitrary \(\mathbf{u} \in W^{p}( \Div; \Omega)\) as the limit
            \[
			     \Lambda_{v}\mathbf{u} = \lim\limits_{n\to \infty}  \Lambda_{v}\mathbf{u}_n
		    \]
		    for some sequence \(\{\mathbf{u}_n\}_{n=1}^{\infty} \subset \left(\mathfrak{D} (\widebar{\Omega})\right)^d \) such that \( \| \mathbf{u}-\mathbf{u}_n \|_{W^{p}( \Div; \Omega)} \to 0\) as \( n \to \infty \). This limit also preserves the bound
            \[
                |\Lambda_{v} \mathbf{u}|  \leq C\|\mathbf{u}\|_{W^{p}( \Div; \Omega)} \enskip \|v\|_{W^{1-\frac{1}{q},q}(\Gamma)}.
            \]

            \item  Therefore, we simply define \(\tr_1(\mathbf{u})\) for an arbitrary \(\mathbf{u} \in W^{p}( \Div; \Omega)\) according to the pairing
		      \[
			      \langle \tr_1(\mathbf{u}), v\rangle = \Lambda_{v} \mathbf{u}
		      \]
              for all \(v \in W^{1-\frac{1}{q}, q}(\Gamma)\).

            \item We finally remark that this pairing is, indeed, linear in \( v \) since
            \[
            \begin{split}
                \langle \tr_1(\mathbf{u}), a_1v_1+a_2v_2\rangle &= \lim\limits_{n\to \infty} \langle \tr_1(\mathbf{u}_n), a_1v_1+a_2v_2\rangle \\
                &= \lim\limits_{n\to \infty} (a_1 \, \langle \tr_1(\mathbf{u}_n), v_1\rangle + a_2\, \langle \tr_1(\mathbf{u}_n), v_2\rangle) \\
                &= a_1 \, \langle \tr_1(\mathbf{u}), v_1\rangle + a_2 \, \langle \tr_1(\mathbf{u}), v_2\rangle
            \end{split}
            \]
		    for some sequence \(\{\mathbf{u}_n\}_{n=1}^{\infty} \subset \left(\mathfrak{D} (\widebar{\Omega})\right)^d \) such that \( \| \mathbf{u}-\mathbf{u}_n \|_{W^{p}( \Div; \Omega)} \to 0\) as \( n \to \infty \). Additionally,
            \[
			  \|\tr_1(\mathbf{u}) \|_{  W^{-1+\frac{1}{q}, p}(\Gamma)}  = \sup_{\| v \| \leq 1}    \langle \tr_1(\mathbf{u}), v\rangle \leq C\|\mathbf{u}\|_{W^{p}( \Div; \Omega)}
            \]
            for some constant \( C >0 \) independent of \( \mathbf{u} \).
\end{enumerate}
    So, indeed, \(\tr_1(\mathbf{u}) \in W^{-1+\frac{1}{q}, p}(\Gamma)\) and this concludes the proof.
\end{proof}

\section*{Declarations}

\paragraph{\bf Ethical Statements:} Not applicable.

\paragraph{\bf Consent to participate:} Not applicable.

\paragraph{\bf Consent to publish:} Not applicable.

\paragraph{\bf Funding: } The research reported in this paper was funded through King Abdullah University of Science and Technology (KAUST) baseline funds and KAUST OSR-CRG2021-4674. A. M. Alharbi was additionally supported by Islamic University of Madinah, AD no.133212662/102931.

\bibliographystyle{abbrv}
\bibliography{mfgv7_nn.bib}

\end{document}